\newtheorem{thm}{Theorem}
\newtheorem{lemma}[thm]{Lemma}
\newtheorem{prop}[thm]{Proposition}
\newcommand{\E}{\mathbb{E}}
\newcommand{\Prob}{\mathbb{P}}
\newcommand{\N}{\mathbb{N}}
\newcommand{\n}{\mathcal{N}}
\newcommand{\C}{\mathbb{C}}
\DeclareMathOperator{\tr}{tr}
\newcommand{\abs}[1]{\left\vert #1 \right\vert}
\newcommand{\eps}{\varepsilon}
\DeclareMathOperator{\var}{Var}
\newcommand{\Set}[2]{\left\{#1 \mathrel{} \middle| \mathrel{} #2 \right\}}
\author{Elizabeth S.\ Meckes and Mark W.\ Meckes}
\address{Department of Mathematics, Case Western Reserve University,
10900 Euclid Ave., Cleveland, Ohio 44106, U.S.A.}
\email{elizabeth.meckes@case.edu}
\address{Department of Mathematics, Case Western Reserve University,
10900 Euclid Ave., Cleveland, Ohio 44106, U.S.A.}
\email{mark.meckes@case.edu}
\title{A rate of convergence for the circular law for the complex Ginibre ensemble}
\begin{document}

\maketitle

\begin{abstract}
  We prove rates of convergence for the circular law for the complex
  Ginibre ensemble.  Specifically, we bound the $L_p$-Wasserstein
  distances between the empirical spectral measure of the normalized
  complex Ginibre ensemble and the uniform measure on the unit disc,
  both in expectation and almost surely.  For $1 \le p \le 2$, the
  bounds are of the order $n^{-1/4}$, up to logarithmic factors.
\end{abstract}

\begin{otherlanguage}{french}
\begin{abstract}
Nous \'etablissons des vitesses de convergence pour la loi du cercle de l'ensemble de Ginibre complexe.  Plus pr\'ecis\'ement, nous donnons des bornes sup\'erieurs pour les distances de Wasserstein d'ordre $p$ entre la mesure spectrale empirique de l'ensemble de Ginibre complexe normalis\'ee et la mesure uniform du disque, dans l'esp\'erance et presque s\^urement.  Si $1\le p\le 2$, les bournes sont de la taille $n^{-1/4}$, \`a des facteurs logarithmiques. 
\end{abstract}
\end{otherlanguage}

\section{Introduction}
\label{S:intro}

Consider an $n \times n$ random
matrix $G_n$ with i.i.d.\ standard complex Gaussian entries; put slightly differently,
$G_n$ is a random element of the set of $n\times n$ matrices over
$\C$, whose distribution has density proportional to $e^
{-\tr(GG^*)}$.  The random matrix $G_n$ is said
to belong to the \emph{complex Ginibre ensemble}. Although this ensemble was
introduced by Ginibre \cite{Ginibre} without any particular
application in mind, the eigenvalues of $G_n$ have since been used to
model a wide variety of physical phenomena; see in particular
\cite{Fo} and \cite{KhSo} for
references.

The central result about the asymptotic behavior of the eigenvalues of
$G_n$ is the famous circular law. Let $\mu_n$ denote the empirical
spectral measure of $\frac{1}{\sqrt{n}} G_n$; that is,
\[
\mu_n = \frac{1}{n} \sum_{k=1}^n \delta_{\lambda_k},
\]
where $\lambda_1, \dots, \lambda_n$ are the eigenvalues of
$\frac{1}{\sqrt{n}} G_n$. The circular law states that when $n \to
\infty$, $\mu_n$ converges in some sense to the uniform measure $\nu$
on the unit disc $D:=\Set{z \in \C}{\abs{z} \le 1}$.  This was first
established by Mehta \cite{Mehta-1}, who showed that the mean
empirical spectral measure $\E \mu_n$ converges weakly to $\nu$.  A
large literature followed, which established the circular
law for more general random matrix ensembles, and for stronger forms
of convergence, culminating in the recent proof by Tao and Vu
\cite{TaVu2} of the circular law for random matrices with i.i.d.\
entries with arbitrary entries with finite variance, in the sense of
almost sure weak convergence.  The reader is referred to the survey by
Bordenave and Chafa\"i \cite{BoCh} for further history and related
results.

The main results of this paper give rates of convergence for the
circular law for the complex Ginibre ensemble $G_n$, both in
expectation and almost surely.

\begin{thm}\label{T:expected-Wp-distance-intro}
  There is a constant $C > 0$ such that for all $n \in \N$ and all
  $p\ge 1$,
  \[
  \E W_p(\mu_n,\nu)\le C \max \left\{ \frac{\sqrt{p}}{n^{1/4}},
    \left(\frac{\log n}{n}\right)^{\frac{1}{2p}} \right\},
  \]
  where $W_p(\mu,\nu)$ denotes the $L_p$-Wasserstein distance between
  probability measures $\mu$ and $\nu$.
\end{thm}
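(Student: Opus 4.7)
I would partition the unit disc $D$ into $N \sim \sqrt{n}$ annular-sector cells $\{A_j\}$ of equal $\nu$-mass $1/N$ and diameter $O(n^{-1/4})$, bound the count discrepancies $|N(A_j) - n\nu(A_j)|$ using the determinantal structure of the eigenvalues, and assemble these into a transport estimate for $W_p(\mu_n,\nu)$. The cell scale $N \sim \sqrt n$ is dictated by the target rate: intra-cell transport contributes $\max_j \mathrm{diam}(A_j) = O(N^{-1/2}) = O(n^{-1/4})$, so any finer partition is wasteful.

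\textbf{Concentration via the determinantal kernel.} The eigenvalues of $\frac{1}{\sqrt n}G_n$ form a determinantal point process with explicit kernel $K_n(z,w) = \frac{n}{\pi} e^{-n(|z|^2+|w|^2)/2} \sum_{k=0}^{n-1} (n z \bar w)^k/k!$. By the Hough--Krishnapur--Peres--Vir\'ag theorem, $N(A_j) = \#\{\lambda_k\in A_j\}$ is equal in distribution to a sum of independent Bernoullis, yielding sub-Gaussian moment bounds $\|N(A_j) - \E N(A_j)\|_p \lesssim \sqrt{p\, \E N(A_j)}$ for $p \lesssim \E N(A_j)$. Since $\E N(A_j) \approx n\nu(A_j) = \sqrt n$, this gives $\|N(A_j) - n\nu(A_j)\|_p \lesssim \sqrt p\cdot n^{1/4}$, modulo a small bias correction from the boundary behavior of $K_n(z,z)$.

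\textbf{Assembly.} A standard partition-based inequality bounds $W_p(\mu_n,\nu)$ by the intra-cell diameter $\max_j \mathrm{diam}(A_j) = O(n^{-1/4})$ plus a rebalancing cost, which one controls by a weighted combination of cell diameters and the normalized mass discrepancies $|N(A_j) - n\nu(A_j)|/n$:
\[
W_p(\mu_n,\nu)^p \lesssim \max_j \mathrm{diam}(A_j)^p + \frac{1}{n}\sum_j \mathrm{diam}(A_j)^p\, |N(A_j) - n\nu(A_j)|.
\]
Taking the $L^p$-expectation and invoking the concentration estimate yields the $\sqrt p / n^{1/4}$ term. For large $p$, the bound is instead dominated by the worst cell through a union bound over the $\sim \sqrt n$ cells, producing the $(\log n / n)^{1/(2p)}$ contribution stated in the theorem.

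\textbf{Main obstacle.} I expect the principal difficulty to lie at the edge $\{|z|\approx 1\}$, where $K_n(z,z)$ transitions from $n/\pi$ to zero on scale $n^{-1/2}$. Edge cells are anomalous both in the bias $|\E N(A_j) - n\nu(A_j)|$ and in the sub-Gaussian scale of concentration, and likely require either an adaptive partition or a complementary use of Kostlan's representation $\{|\lambda_k|^2\} \stackrel{d}{=} \{Y_k/n\}$ with independent $Y_k \sim \Gamma(k,1)$ to handle the radial behavior separately. Tracking constants carefully enough to extract the clean $\sqrt p$-dependence uniformly for all $p \ge 1$, and matching it against the $(\log n/n)^{1/(2p)}$ large-$p$ regime, is the principal technical challenge.
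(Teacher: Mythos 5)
Your outline shares the paper's starting point---the determinantal structure of the eigenvalues and concentration of counting functions---but the assembly step has a genuine gap, and it is precisely this gap that the paper's central device, the spiral order $\prec$, is designed to close.

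The partition-based inequality you invoke,
\[
W_p(\mu_n,\nu)^p \lesssim \max_j \mathrm{diam}(A_j)^p + \frac{1}{n}\sum_j \mathrm{diam}(A_j)^p\, |N(A_j) - n\nu(A_j)|,
\]
is not valid. The rebalancing term charges each unit of excess or deficit only the cost of crossing a single cell, but there is no reason the excess in one cell can be absorbed by its neighbors: if cell $A_1$ has excess $\delta$ and a far-away cell $A_N$ has deficit $\delta$, the true transport cost is roughly $\delta\cdot\mathrm{dist}(A_1,A_N)/n$, which exceeds your bound by a factor up to $\mathrm{diam}(D)/\mathrm{diam}(A_j)\sim n^{1/4}$. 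To turn local count discrepancies into a Wasserstein bound, one must control \emph{cumulative} discrepancies along some traversal of the cells, and show that these stay small in a way that allows excess to be carried only short distances. This is exactly what the paper does via the spiral order: the quantity controlled is not $N(A_j)-n\nu(A_j)$ for individual cells but $\n(A_{j,\theta}) - n\abs{A_{j,\theta}}/\pi$ for initial segments of the spiral, which are discs with a partial outer ring. Because the boundary of such a region is short ($\sim j$ eigenvalues live near it), Proposition \ref{T:variances} gives $\var \n(A_{j,\theta}) \lesssim j$---much smaller than the Poisson scale $\E\n(A_{j,\theta})\sim j^2$ that your bound $\|\cdot\|_p \lesssim\sqrt{p\,\E N(A_j)}$ implicitly invokes. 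This cumulative control translates directly into concentration of the $k$-th ordered eigenvalue $\lambda_k$ around a deterministic predicted location $\tilde\lambda_k$ at scale $n^{-1/4}$ (Theorem \ref{T:eigenvalue-deviation}), producing an explicit coupling rather than a partition-rebalancing argument.

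Your discussion of the $(\log n/n)^{1/(2p)}$ term is also off target. If your assembly inequality held, it would give $\E W_p \lesssim n^{-1/4}$ uniformly in $p$---strictly better than the theorem for $p>2$, which is a signal the inequality cannot be right. In the paper, the second term arises because the $m\sim\sqrt{n\log n}$ outermost eigenvalues (and any outside $D$) lack $n^{-1/4}$-scale concentration. They are matched to a uniform smear over the outer annulus and controlled only by the crude tail bound of Lemma \ref{T:quick-and-dirty}; when that $O(1)$-scale contribution is averaged with weight $m/n$ and raised to the power $1/p$, one gets $(m/n)^{1/p}\sim(\log n/n)^{1/(2p)}$. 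This is not a union bound over cells. Your instinct that the edge is the main difficulty is right, but the resolution requires separating out these outlier eigenvalues explicitly rather than treating them as anomalous cells.
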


In particular, in the most widely used Wasserstein metrics, namely
$p=1,2$, we have
\[
\E W_1(\mu_n,\nu)\le\frac{C}{n^{1/4}}\qquad
\mbox{and}\qquad 
\E W_2(\mu_n,\nu)\le C\left(\frac{\log n}{n}\right)^{\frac{1}{4}}.
\]

\begin{thm}\label{T:as-rate}
  For each $p \ge 1$ there is a constant $K_p > 0$ such that with
  probability $1$, for sufficiently large $n$,
  \[
  W_p(\mu_n, \nu) \le K_p \frac{\sqrt{\log n}}{n^{1/4}}
  \]
  when $1 \le p \le 2$, and
  \[
  W_p(\mu_n, \nu) \le K_p \left(\frac{\log n}{n}\right)^{1/2p}
  \]
  when $p > 2$.
\end{thm}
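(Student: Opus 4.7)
To prove Theorem~\ref{T:as-rate}, my plan is to combine Theorem~\ref{T:expected-Wp-distance-intro} with a concentration inequality for $W_p(\mu_n,\nu)$ about its mean, and then apply the Borel--Cantelli lemma. If one establishes a tail bound of the form $\Prob(W_p(\mu_n,\nu)\ge K_p a_n)\le \epsilon_n$ with $\sum_n\epsilon_n<\infty$, where $a_n$ is the target almost sure rate, then the event in question occurs only finitely often almost surely, which is the conclusion. Taking $\epsilon_n\le n^{-2}$ is more than enough.

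The natural first attempt is Gaussian concentration applied to the map $F\colon\C^{n\times n}\to\R$ defined by $F(G)=W_p(\mu_{G/\sqrt{n}},\nu)$, leveraging the fact that the entries of $G_n$ are i.i.d.\ standard complex Gaussians. Gaussian concentration gives sub-Gaussian tails for $F$ about its mean, controlled by the Lipschitz constant of $F$ in the Hilbert--Schmidt metric. The obstruction is that eigenvalues of a non-normal matrix are notoriously unstable in the entries: Hoffman--Wielandt fails, and the Bauer--Fike surrogate involves the eigenvector condition number, which is typically exponentially large for Ginibre matrices. One workaround is to replace $F$ by a Lipschitz modification agreeing with it on an event of probability $1-o(n^{-2})$ on which the spectrum is stable, or to work with a smoothed resolvent-based observable; both directions are delicate.

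A cleaner alternative exploits the structural features of the Ginibre spectrum used in the proof of Theorem~\ref{T:expected-Wp-distance-intro}: Kostlan's theorem, which says that $\{|\lambda_k|^2\}_{k=1}^n$ has the same distribution as $n$ independent Gamma random variables, together with the determinantal structure of the eigenvalue process and its rotational invariance. Splitting $W_p(\mu_n,\nu)$ via the triangle inequality through an auxiliary measure $\tilde\mu_n$ that symmetrizes the arguments of the eigenvalues over each radial orbit, one treats the radial piece $W_p(\tilde\mu_n,\nu)$ by classical concentration for the empirical CDF of an independent sample (DKW or McDiarmid, giving fluctuations of order $\sqrt{\log n/n}$), and the angular piece $W_p(\mu_n,\tilde\mu_n)$ by variance bounds for linear statistics $\frac{1}{n}\sum_k e^{ij\theta_k}$ arising from the determinantal kernel.

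The main technical obstacle I expect is promoting the angular concentration, which is naturally mode-by-mode, to a uniform $L^p$-transport estimate; a union bound or a summation over a truncated set of Fourier modes is required, and the resulting logarithmic loss is precisely the source of the $\sqrt{\log n}$ factor in the $p\le 2$ case. The dichotomy between $p\le 2$ and $p>2$ in Theorem~\ref{T:as-rate} then reflects which term in the maximum of Theorem~\ref{T:expected-Wp-distance-intro} dominates in the respective regime: in the large-$p$ regime the expectation itself is already of order $(\log n/n)^{1/2p}$, so the fluctuation term is absorbed, whereas for $p\le 2$ the expectation is of order $n^{-1/4}$ and the extra $\sqrt{\log n}$ comes from the Borel--Cantelli exponent.
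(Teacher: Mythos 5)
Your high-level strategy --- find a summable tail bound for $W_p(\mu_n,\nu)$ and apply Borel--Cantelli --- matches the paper's, but the mechanism you propose for obtaining that tail bound is different from the paper's, and the route you sketch has a genuine gap. The paper does \emph{not} combine the expectation bound of Theorem~\ref{T:expected-Wp-distance-intro} with a concentration inequality for $W_p$ about its mean. Instead, it directly bounds $\Prob[W_p(\mu_n,\nu_n) > t]$ via the coupling to $\nu_n$: the $p$th power of the transport cost in that coupling is $\frac{1}{n}\sum_k |\lambda_k - \tilde\lambda_k|^p$, which a union bound reduces to the per-eigenvalue deviation estimates of Theorem~\ref{T:eigenvalue-deviation} (for the inner $n-m$ eigenvalues) and Lemma~\ref{T:quick-and-dirty} (for the outer $m$). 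Those deviation estimates are Gaussian-type with ``variance'' scaling like $\ell/n \lesssim \sqrt{n}/n$, which is precisely where $t \sim \sqrt{\log n}/n^{1/4}$ is needed to make each term $\lesssim n^{-3}$. You attribute the $\sqrt{\log n}$ to ``the Borel--Cantelli exponent,'' which is morally right, but it enters at the per-eigenvalue level, not at the level of $W_p$ itself.

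The more serious gap is in your proposed radial/angular decomposition. If $\tilde\mu_n$ is the measure obtained by replacing each atom $\delta_{\lambda_k}$ with uniform measure on the circle of radius $|\lambda_k|$, then $\mu_n$ and $\tilde\mu_n$ have identical radial marginals and the disintegration of the transport over radii forces you to move each atom onto its own circle, at cost $\Theta(|\lambda_k|)$; this gives $W_p(\mu_n,\tilde\mu_n) = \Theta(1)$, which is useless. If instead $\tilde\mu_n$ re-places each eigenvalue at an equispaced angle within a ``radial orbit,'' then you must first specify how to group eigenvalues into orbits, and the only grouping that works is the radial-quantile grouping --- which is exactly the paper's spiral order $\prec$. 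At that point the ``angular piece'' $W_p(\mu_n,\tilde\mu_n)$ is not controlled by variance bounds on the linear statistics $\frac{1}{n}\sum_k e^{ij\theta_k}$: passing from mode-by-mode control to an actual transport bound on a circle requires a negative-Sobolev-to-Wasserstein comparison that fails for atomic measures, and you would still need to handle the correlation between angles and radii that Kostlan's theorem is silent about. The paper's essential device --- the two-dimensional counting functions $\n(A_{j,\theta})$, whose concentration captures radial and angular deviations jointly --- is precisely what is missing from your sketch, and without it the decomposition does not close.
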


Recall that for any $p \ge 1$, the $L_p$-Wasserstein distance between two
probability measures $\mu$ and $\nu$ on $\C$ is defined by
\[
W_p(\mu, \nu) = \left(\inf_{\pi \in \Pi(\mu, \nu)} \int \abs{w - z}^p
  \ d\pi(w, z) \right)^{1/p},
\]
where $\Pi(\mu, \nu)$ is the set of all couplings of $\mu$ and $\nu$;
i.e., probability measures on $\C \times \C$ with marginals $\mu$ and
$\nu$ (see, e.g., \cite{Vi}).

A few related results have appeared previously.  In \cite[Section
14]{TaVu1}, Tao and Vu sketched an argument giving an almost sure
convergence rate for the empirical spectral measure of a random matrix
$\frac{1}{\sqrt{n}}M_n$ with i.i.d.\ entries with a finite moment of order $2+\eps$.
The convergence in this case was in Kolmogorov distance (sup-distance
between bivariate cumulative distribution functions), and the rate is
of order $n^{-c}$ for some unspecified (but rather small) $c = c(\eps)
> 0$.  Earlier, Bai \cite{Bai} established, as an intermediate
technical tool, a convergence rate for the empirical spectral measures
of the Hermitianized random matrices $(M_n - z I_n)^* (M_n - z I_n)$.

In a different direction, Sandier and Serfaty \cite{SaSe} and Rougerie
and Serfaty \cite{RoSe} studied empirical measures of Coulomb gases,
which for particular values of certain parameters have the same
distribution as $\mu_n$.  Among their results are tail bounds for
distances between these measures from deterministic equilibrium
measures, in terms of metrics which are dual to Sobolev norms on a
ball.  For a certain choice of parameter, in the $2$-dimensional case
their metric becomes
\[
\sup\Set{ \int_{rD} f \ d\mu(z) - \int_{rD} f \ d\nu(z)}{f \text{
    $1$-Lipschitz}};
\]
without the restriction to $rD$ this would coincide with $W_1$, by the
Kantorovitch duality theorem.

\bigskip

The basic idea of the proofs of Theorems
\ref{T:expected-Wp-distance-intro} and \ref{T:as-rate} is reasonably
simple, but verifying all of the details gets somewhat technical, and
so we first give an outline of our approach.
\begin{enumerate}[label = \bfseries Step \arabic*:]
\item We begin by \textbf{ordering the eigenvalues
    $\{\lambda_k\}_{k=1}^n$ in a spiral fashion}.  Specifically, we
  define a linear order $\prec$ on $\C$ by making $0$ initial, and for
  nonzero $w, z \in \C$, we declare $w \prec z$ if any of the
  following holds:
  \begin{itemize}
  \item $\lfloor \sqrt{n} \abs{w} \rfloor < \lfloor \sqrt{n} \abs{z} \rfloor$.
  \item $\lfloor \sqrt{n} \abs{w} \rfloor = \lfloor \sqrt{n} \abs{z} \rfloor$ and
    $\arg w < \arg z$.
  \item $\lfloor \sqrt{n} \abs{w} \rfloor = \lfloor \sqrt{n} \abs{z} \rfloor$,
    $\arg w = \arg z$, and $\abs{w} \ge \abs{z}$.
  \end{itemize}
  Here we are using the convention that $\arg z \in (0,2\pi]$.   
  
  We order the eigenvalues according to $\prec$: first the eigenvalues
  in the disc of radius $\frac{1}{\sqrt{n}}$ are listed in order of
  increasing argument, then the ones in the annulus with inner radius
  $\frac{1}{\sqrt{n}}$ and outer radius $\frac{2}{\sqrt{n}}$ in order
  of increasing argument, and so on.  (With probability $1$, no two
  eigenvalues of $G_n$ have the same argument; thus the details of the
  last condition in the definition of $\prec$ are irrelevant and it is
  included only for completeness.)

\item We \textbf{define \emph{predicted locations} for (most of) the
    eigenvalues} as follows.  Fix some $m$ so that $n-m$ is a perfect
  square.  Then $\tilde{\lambda}_1 = 0$, $\{\tilde{\lambda}_2,
  \tilde{\lambda}_3, \tilde{\lambda}_4\}$ are $\frac{1}{\sqrt{n}}$
  times the $3^{\mathrm{rd}}$ roots of unity (in increasing order with
  respect to $\prec$), the next five are $\frac{2}{\sqrt{n}}$ times
  the $5^{\mathrm{th}}$ roots of unity, and so on until
  $\tilde{\lambda}_{n-m}$.

  Formally, given $1 \le k \le n-m$, write $\ell = \lceil \sqrt{k}
  \rceil$ and $q = k - (\ell - 1)^2$, so that
  \begin{equation} 
    \label{E:klq} 
    k = (\ell-1)^2 + q \qquad \text{and}
    \qquad 1 \le q \le 2\ell -1.
  \end{equation}
  Now define
  \[
  \tilde{\lambda}_k = \frac{\ell-1}{\sqrt{n}} e^{2 \pi i q / (2\ell - 1)}.
  \]
  Observe that the sequence
  $\bigl(\tilde{\lambda}_k\bigr)_{k=1}^{n-m}$ is increasing with
  respect to $\prec$.

\item \label{Step:ew-concentration} We \textbf{show that most of the
    eigenvalues $\lambda_k$ concentrate around their predicted
    locations} $\tilde{\lambda}_k$.  The eigenvalue process of $G_n$
  is a determinantal point process, from which concentration
  inequalities for the number of eigenvalues within subsets of $D$
  follow.  We apply this concentration property to the number of
  eigenvalues in an initial segment with respect to the order $\prec$.
  Geometric arguments allow one to move from this concentration to
  concentration of individual eigenvalues around their predicted
  values.

\item We \textbf{couple the empirical spectral measure $\mu_n$ to the
    measure $\nu_n$} which puts mass $\frac{1}{n}$ at each point
  $\tilde{\lambda}_1, \dots, \tilde{\lambda}_{n-m}$, and mass
  $\frac{m}{n}$ uniformly on the annulus \\$\Set{ z \in \C }{\sqrt{1 -
      \frac{m}{n}} \le \abs{z} \le 1 }$.  The concentration
  established in the previous step allows us to estimate $W_p(\mu_n,
  \nu_n)$ via this coupling.

\item \label{Step:coupling-tildes-to-uniform}The measure
  \textbf{$\nu_n$ is approximately uniform} on $D$.

\end{enumerate}

This approach adapts those taken by Dallaporta \cite{Da12} for the
Gaussian Unitary Ensemble, and by the authors
\cite{MeMe-powers} for random unitary matrices.  In those settings,
the linear order of the eigenvalues was of critical importance.  The lack of a
natural order on the complex plane is the major obstacle in adapting
the methods of \cite{Da12,MeMe-powers} for the Ginibre ensemble, and
it is this difficulty which is addressed by the introduction of the
spiral order $\prec$.

The rest of this paper is organized as follows.  In Section
\ref{S:technical-tools} we dispense with Step 5 of the outline, and
collect the main technical tools which will be used in the rest of the
paper.  In Section \ref{S:means-and-variances} we estimate the mean
and variance of the number of eigenvalues in an initial segment with
respect to the order $\prec$. In Section \ref{S:deviations}, we derive estimates for the concentration of individual eigenvalues around their predicted values (Step 3 above), using the results of the previous two
sections.  Finally, in Section \ref{S:distances}, we carry out the coupling argument (Step 4
of the outline) and complete the proofs of Theorems
\ref{T:expected-Wp-distance-intro} and \ref{T:as-rate}.  We also observe (Theorem \ref{T:TV-mean}) that our results yield the correct
rate of convergence of the mean empirical spectral measure in the
total variation metric.

\section{Technical tools}
\label{S:technical-tools}

We begin by taking care of Step 5 in the outline above.  Recall that
$\nu_n$ is the measure which puts mass $\frac{1}{n}$ at each point
$\tilde{\lambda}_1, \dots, \tilde{\lambda}_{n-m}$, and mass
$\frac{m}{n}$ uniformly on the annulus $\Set{ z \in \C }{\sqrt{1 -
    \frac{m}{n}} \le \abs{z} \le 1 }$.

\begin{lemma}
  \label{T:coupling}
  For each positive integer $n$ and each $p \ge 1$,
  \(
  W_p(\nu_n, \nu) < \frac{8}{\sqrt{n}}.
  \)
\end{lemma}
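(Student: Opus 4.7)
The plan is to construct an explicit coupling of $\nu$ and $\nu_n$ by partitioning $D$ into cells, each containing exactly one of the predicted locations $\tilde{\lambda}_k$ with matching $\nu$-mass $1/n$, plus one leftover annulus for the uniform part. For each $\ell = 1, \ldots, \sqrt{n-m}$, partition the annulus $A_\ell = \{z : (\ell-1)/\sqrt{n} \le |z| \le \ell/\sqrt{n}\}$ into $2\ell - 1$ congruent circular sectors of angular width $2\pi/(2\ell - 1)$, aligned so that each sector contains precisely one of the layer-$\ell$ predicted points $\tilde{\lambda}_k$ on its inner boundary. Each sector has area $\pi/n$, hence $\nu$-mass $1/n$, matching the point mass $\nu_n(\{\tilde{\lambda}_k\})$. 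The remaining annulus $R = \{z : \sqrt{1 - m/n} \le |z| \le 1\}$ has $\nu$-mass $m/n$, matching $\nu_n|_R$ exactly.

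Call the sector containing $\tilde{\lambda}_k$ by $C_k$ and define the transport plan $\pi$ as follows: on each $C_k$, send all of $\nu|_{C_k}$ to the atom $\tilde{\lambda}_k$ (so $\pi$ restricted to $C_k \times \{\tilde{\lambda}_k\}$ is $\nu|_{C_k} \otimes \delta_{\tilde{\lambda}_k}$); on $R$, use the identity coupling, which is valid since $\nu|_R = \nu_n|_R$. This is a coupling of $\nu$ and $\nu_n$, so
\[
W_p(\nu_n, \nu)^p \le \sum_{k=1}^{n-m} \int_{C_k} |w - \tilde{\lambda}_k|^p \, d\nu(w).
\]

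The remaining work is the uniform bound on $|w - \tilde{\lambda}_k|$ for $w \in C_k$. For $k = 1$, $C_1$ is the disc of radius $1/\sqrt{n}$ and $\tilde{\lambda}_1 = 0$, so $|w - \tilde{\lambda}_1| \le 1/\sqrt{n}$. For $k$ in layer $\ell \ge 2$, writing $w = r e^{i\theta}$ and $\tilde{\lambda}_k = s e^{i\phi}$ with $s = (\ell-1)/\sqrt{n}$, the triangle inequality gives
\[
|w - \tilde{\lambda}_k| \le |r - s| + r |\theta - \phi| \le \frac{1}{\sqrt{n}} + \frac{\ell}{\sqrt{n}} \cdot \frac{\pi}{2\ell-1} \le \frac{1}{\sqrt{n}}\left(1 + \frac{2\pi}{3}\right),
\]
using $\ell/(2\ell-1) \le 2/3$ for $\ell \ge 2$. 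Hence $|w - \tilde{\lambda}_k| < 4/\sqrt{n}$ uniformly, and summing yields $W_p(\nu_n,\nu)^p < ((n-m)/n)(4/\sqrt{n})^p < (8/\sqrt{n})^p$.

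There is no real obstacle here; the argument is purely geometric bookkeeping. The essential point is that the construction is calibrated so that each sector has the correct area, while the arc length at the outer radius of a layer-$\ell$ sector, namely $(2\pi/(2\ell-1))\cdot(\ell/\sqrt{n})$, is bounded by a constant times $1/\sqrt{n}$ independent of $\ell$. Thus the $L^\infty$ cost of transporting each cell to its predicted point is uniformly $O(1/\sqrt{n})$, which yields the same rate for $W_p$ for every $p \ge 1$.
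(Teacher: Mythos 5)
Your proof is correct and takes essentially the same approach as the paper: partition the disc into sectors of $\nu$-mass $1/n$, transport each sector's mass to the predicted point it contains, use the identity coupling on the outer annulus, and bound the transport cost by the triangle inequality. The only difference is cosmetic: you center $\tilde{\lambda}_k$ angularly within its sector (giving $|\theta-\phi|\le \pi/(2\ell-1)$) whereas the paper places it at one angular endpoint (giving $\le 2\pi/(2\ell-1)$), so you obtain a constant $4$ rather than $8$.
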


\begin{proof}
  We couple $\nu_n$ to $\nu$ as follows.  The sector
  \[
  S_k:=\Set{ z \in \C }{ \frac{\ell-1}{\sqrt{n}} \le \abs{z} <
    \frac{\ell}{\sqrt{n}}, \, 
    \frac{2\pi  (q-1)}{2\ell - 1} \le \arg z \le \frac{ 2\pi q}{2\ell
      - 1} },
  \]
  where $k$, $\ell$, and $q$ are related by \eqref{E:klq}, satisfies
  $\nu(S_k) = 1/n$ for each $1 \le k \le n-m$.  All of the mass in
  $S_k$ is coupled to $\tilde{\lambda}_k$, and the identity coupling
  is used in the annulus $\Set{ z \in \C }{ \sqrt{1 - \frac{m}{n}} \le
    \abs{z} \le 1 }$.  If $r \in \left[ \frac{\ell}{\sqrt{n}},
    \frac{\ell-1}{\sqrt{n}}\right]$ and $\varphi \in \left[\frac{2\pi
      (q-1)}{2\ell - 1}, \frac{ 2\pi q}{2\ell - 1}\right]$, then
  \begin{equation*}
    \begin{split}
      \abs{r e^{i \varphi} - \tilde{\lambda}_k} 
      & \le \abs{r e^{i \varphi} - r e^{2 \pi i q / (2\ell - 1)}} +
      \abs{r e^{2 \pi i q / (2\ell - 1)} - \tilde{\lambda}_k} \\
      & \le r \abs{\varphi -  \frac{2 \pi q}{(2\ell - 1)}} + \abs{r - \frac{\ell-1}{\sqrt{n}}} \\
      & \le \frac{2\pi \ell}{(2\ell - 1) \sqrt{n}} + \frac{1}{\sqrt{n}}
      < \frac{8}{\sqrt{n}}.
    \end{split}
  \end{equation*}
  Therefore
  \[
  W_p(\nu_n, \nu) < \left(\frac{n-m}{n}
    \left(\frac{8}{\sqrt{n}}\right)^p + \frac{m}{n} \big(0\big)\right)^{1/p} \le
  \frac{8}{\sqrt{n}}.
  \qedhere
  \]
\end{proof}

Lemma \ref{T:coupling} shows that, up to the constant $8$, $\nu_n$ is
an optimal approximation of $\nu$ by an empirical measure on $n$
points.  Indeed, suppose that $x_1, \dots, x_n$ are any $n$ points in
$\C$, and let $\rho_n = \frac{1}{n} \sum_{i=1}^n \delta_{x_i}$.  Then
the area of the union of the $\eps$-discs centered at the $x_i$ is at
most $n \pi \eps^2$, so $W_1(\rho_n, \nu) \ge (1 - n \eps^2) \eps$,
since a fraction at least $(1-n\eps^2)$ of the mass of $\nu$ must move
a distance at least $\eps$ in transporting $\nu$ to $\rho_n$.
Optimizing in $\eps$ gives $W_p(\rho_n, \nu) \ge W_1(\rho_n, \nu) \ge
\frac{2}{3\sqrt{3n}}$.

\medskip

\begin{prop}
  \label{T:ew-counting-deviations}
  Let $A\subseteq D$ be measurable, and let $\n (A)$ denote the number
  of eigenvalues of $\frac{1}{\sqrt{n}}G_n$ lying in $A$.  Then
  \[
  \Prob \left[ \n(A) - \E \n(A) \ge t \right] \le \exp \left[ - \min
    \left\{ \frac{t^2}{4 \sigma^2}, \frac{t}{2}\right\}\right]
  \]
  and 
  \[
  \Prob \left[ \E \n(A) - \n(A) \ge t \right] \le \exp \left[ - \min
    \left\{ \frac{t^2}{4 \sigma^2}, \frac{t}{2}\right\}\right]
  \]
  for each $t \ge 0$, where $\sigma^2 = \var \n(A)$.
\end{prop}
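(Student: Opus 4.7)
The plan is to exploit the well-known fact that, for a determinantal point process, the number of points falling in any measurable set is distributed as a sum of independent Bernoulli random variables, and then to apply a standard Bernstein-type inequality.

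First, I would recall the structural theorem of Hough--Krishnapur--Peres--Vir\'ag: if $\mathcal{X}$ is a determinantal point process on a space $E$ with a self-adjoint, locally trace-class kernel $K$, then for any measurable $A \subseteq E$ the random variable $\mathcal{N}(A)$ has the same distribution as $\sum_j \xi_j$, where the $\xi_j$ are independent Bernoulli random variables with parameters equal to the eigenvalues $p_j \in [0,1]$ of the operator $\mathbf{1}_A K \mathbf{1}_A$ acting on $L^2(E)$. Since the rescaled complex Ginibre eigenvalue process is a determinantal point process on $\C$ (with the well-known Ginibre kernel), this representation applies to $\mathcal{N}(A)$ for any measurable $A \subseteq D$. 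In particular, $\E \mathcal{N}(A) = \sum_j p_j$ and $\sigma^2 = \var \mathcal{N}(A) = \sum_j p_j(1-p_j)$.

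Next, I would apply Bernstein's inequality for sums of independent random variables bounded by $1$ (the Bernoulli case being the standard one): for $S = \sum_j \xi_j$,
\[
\Prob[S - \E S \ge t] \le \exp\left(-\frac{t^2/2}{\sigma^2 + t/3}\right),
\]
and similarly for the lower tail. To get the stated form, I split into two regimes. If $t \le 3\sigma^2$, then $\sigma^2 + t/3 \le 2\sigma^2$, and the exponent is bounded above by $-t^2/(4\sigma^2)$. If $t > 3\sigma^2$, then $\sigma^2 + t/3 \le 2t/3$, and the exponent is bounded above by $-3t/4 \le -t/2$. Combining the two cases gives exactly
\[
\Prob[\mathcal{N}(A) - \E \mathcal{N}(A) \ge t] \le \exp\left(-\min\left\{\frac{t^2}{4\sigma^2},\frac{t}{2}\right\}\right),
\]
and the lower-tail bound is obtained from the symmetric form of Bernstein's inequality (or in fact the even stronger sub-Gaussian inequality, since $-\xi_j$ is bounded above by $0$).

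There is no real obstacle here: the only non-elementary ingredient is the Bernoulli-sum representation for counts of a determinantal process, which is a cited black box, and the rest is an off-the-shelf Bernstein estimate together with a two-case bookkeeping step to massage it into the stated form.
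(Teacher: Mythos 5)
Your proof is correct and takes essentially the same route as the paper: invoke the Hough--Krishnapur--Peres--Vir\'ag Bernoulli-sum representation for counts of a determinantal point process, then apply Bernstein's inequality for sums of independent bounded random variables. The only cosmetic difference is that you start from the ratio form $\exp\bigl(-\tfrac{t^2/2}{\sigma^2 + t/3}\bigr)$ and do a two-case reduction to the $\min$ form, whereas the paper cites a version of Bernstein (Talagrand, Lemma 2.7.1) that is already stated in the $\min$ form.
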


\begin{proof} 
  The eigenvalues of $G_n$ form a determinantal point process on $\C$
  with the kernel
  \begin{equation} \label{E:DPP}
    \begin{split}
      K(z,w) & = \frac{1}{\pi} e^{-(\abs{z}^2 + \abs{w}^2) / 2}
      \sum_{k=0}^{n-1} \frac{(z\overline{w})^k}{k!} \\
      & = \frac{1}{\pi} e^{-\abs{z-w}^2/2} \left(1 -
        e^{-z\overline{w}} \sum_{k=n}^\infty \frac{(z
          \overline{w})^k}{k!} \right).
    \end{split}
  \end{equation}
  The reader is referred to \cite{HKPV06} for the definition of a
  determinantal point process.  The fact that the eigenvalues of $G_n$
  form such a process follows from the original work of Ginibre
  \cite{Ginibre}; see also \cite[Chapter 15]{Mehta-3}.

  This fact combines crucially with \cite[Theorem 7]{HKPV06} (see also
  \cite[Corollary 4.2.24]{AGZ}), which says that $\n(A)$ is
  distributed exactly as a sum of independent $\{0,1\}$-valued random
  variables, whose parameters are given by the eigenvalues (which
  necessarily lie in $[0,1]$) of the
  operator $\mathcal{K}$ on $L_2(A)$ defined by 
\[\mathcal{K}f(z)=\int_A K(z,w)f(w)\ dw.\]
  The statement of the proposition is thus simply Bernstein's classical tail
  inequality for sums of independent bounded random variables
  (see, e.g., \cite[Lemma 2.7.1]{Talagrand}).
\end{proof}

As discussed in Step 3 of the outline, to bound the deviations of
$\lambda_k$ about its predicted location $\tilde{\lambda}_k$, we first
use Proposition \ref{T:ew-counting-deviations} to bound the deviations
of the counting functions for initial segments with respect to the
order $\prec$.  Specifically, we will consider $\n(A_{j,\theta})$, where
\begin{equation*}
  \begin{split}
    A_{j,\theta} &:= \Set{ z \in \C}{ z \prec \frac{j}{\sqrt{n}} e^{i
        \theta}}\\
    &= \Set{z\in\C }{ \abs{z} < \frac{j}{\sqrt{n}} } \cup
    \Set{ z \in \C }{ \frac{j}{\sqrt{n}} \le \abs{z} < \frac{j+1}{\sqrt{n}},\
      0 < \arg z  \le \theta },
     \end{split}
\end{equation*}
for $1 \le j \le \sqrt{n} - 1$ and $0 < \theta \le 2\pi$ (see Figure
\ref{F:segment}).

\begin{figure} 
  \begin{centering}
    \includegraphics[width=3in]{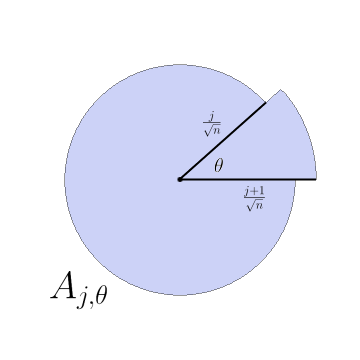}
  \end{centering}
  \caption{}
  \label{F:segment}
\end{figure}

\medskip

Finally, we conclude this section by collecting a few known formulas and estimates
which will be used repeatedly below.  The following integral formula
can be proved by repeated integration by parts; we omit the proof.

\begin{lemma}\label{T:gamma-integral}
  If $k$ is a nonnegative integer and $a > 0$, then
  \[
  \frac{1}{k!} \int_{a}^\infty s^k e^{-s} \ ds = e^{-a} \sum_{\ell =
    0}^k \frac{a^\ell}{\ell!},
  \]
  and consequently
  \[
  \frac{1}{k!} \int_{0}^a s^k e^{-s} \ ds = e^{-a} \sum_{\ell =
    k+1}^\infty \frac{a^\ell}{\ell!}.
  \]
\end{lemma}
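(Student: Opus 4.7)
The plan is to prove the first identity by induction on $k$ and then derive the second as an immediate consequence. For the base case $k=0$, both sides reduce to $e^{-a}$. For the induction step, integrate by parts in the left-hand side with $u = s^k/k!$ and $dv = e^{-s}\,ds$: the boundary term at infinity vanishes (since $s^k e^{-s}\to 0$), the boundary term at $s=a$ contributes $\frac{a^k}{k!}e^{-a}$, and the remaining integral is $\frac{1}{(k-1)!}\int_a^\infty s^{k-1}e^{-s}\,ds$, which by the inductive hypothesis equals $e^{-a}\sum_{\ell=0}^{k-1}\frac{a^\ell}{\ell!}$. Adding the two contributions produces $e^{-a}\sum_{\ell=0}^{k}\frac{a^\ell}{\ell!}$, closing the induction.

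The second identity then follows by subtraction: since $\frac{1}{k!}\int_0^\infty s^k e^{-s}\,ds = 1$ and $e^{-a}\sum_{\ell=0}^\infty\frac{a^\ell}{\ell!}=1$, we obtain
\[
\frac{1}{k!}\int_0^a s^k e^{-s}\,ds = 1 - \frac{1}{k!}\int_a^\infty s^k e^{-s}\,ds = 1 - e^{-a}\sum_{\ell=0}^{k}\frac{a^\ell}{\ell!} = e^{-a}\sum_{\ell=k+1}^\infty\frac{a^\ell}{\ell!}.
\]
There is no real obstacle here; these identities are classical (equivalent to the well-known relationship between the regularized incomplete Gamma function and the Poisson cumulative distribution function), and the induction is purely routine.
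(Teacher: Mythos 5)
Your proof is correct, and it matches the paper's stated (but omitted) approach: the paper says the first identity "can be proved by repeated integration by parts," which is exactly what your induction formalizes, and your derivation of the second identity by subtracting from the total integral is the natural "consequently" step.
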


The following inequality follows from a standard Chernoff bound
argument for Poisson random variables.

\begin{lemma}\label{T:Poisson-tail}
  If $0 < \lambda \le n$, then
  \(
  \sum_{k=n}^{\infty} \frac{\lambda^k}{k!} \le \left(\frac{e\lambda}{n}\right)^n.
  \)
\end{lemma}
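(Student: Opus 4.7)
The plan is to carry out the standard Chernoff optimization, in its purely analytic form. For any parameter $t \ge 1$ and any integer $k \ge n$, one has $1 \le t^{k-n}$, so I would first write
\[
\sum_{k=n}^\infty \frac{\lambda^k}{k!} \le \sum_{k=n}^\infty \frac{\lambda^k t^{k-n}}{k!} \le t^{-n} \sum_{k=0}^\infty \frac{(\lambda t)^k}{k!} = t^{-n} e^{\lambda t}.
\]
This gives a one-parameter family of upper bounds valid for all $t \ge 1$; the remaining task is to optimize.

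Next I would minimize the exponent $-n \log t + \lambda t$ over $t \ge 1$. Elementary calculus gives the critical point $t = n/\lambda$, and the hypothesis $0 < \lambda \le n$ is exactly what guarantees that this optimizer lies in the permissible range $t \ge 1$ (which is what makes the first inequality above legitimate). Substituting $t = n/\lambda$ yields $t^{-n} e^{\lambda t} = (\lambda/n)^n e^n = (e\lambda/n)^n$, as claimed.

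There is essentially no obstacle: the only matching to verify is that the optimizer $n/\lambda$ is admissible, and this holds precisely under the stated assumption. One could alternatively phrase the argument probabilistically by letting $X \sim \text{Poisson}(\lambda)$ and applying the Markov-type bound $\Prob[X \ge n] \le e^{-tn}\E[e^{tX}] = e^{-tn} e^{\lambda(e^t-1)}$, optimized at $t = \log(n/\lambda)$; multiplying through by $e^\lambda$ yields the same inequality. Either presentation occupies only a few lines.
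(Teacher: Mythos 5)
Your proof is correct and is essentially the same argument as the paper's: the paper writes the sum as $e^\lambda \Prob[X\ge n]$ for $X\sim\text{Poisson}(\lambda)$ and applies the Chernoff bound with $t=\log(n/\lambda)$, which is precisely the probabilistic rephrasing you mention at the end, and your analytic version with the weight $t^{k-n}$ optimized at $t=n/\lambda$ is just the same computation with $t$ replaced by $e^t$.
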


\begin{proof}
  Let $X$ have a Poisson distribution with parameter $\lambda$.
  Assuming for simplicity that $\lambda < n$, let $t = \log
  (n/\lambda) > 0$.  Then 
  \begin{equation*}
      \sum_{k=n}^\infty \frac{\lambda^k}{k!} = e^\lambda \Prob
      \left[ X \ge n \right] \le e^{\lambda - tn} \E e^{tX} =
      e^{\lambda e^t - tn}
      =  \left(\frac{e\lambda}{n}\right)^n.
      \qedhere
  \end{equation*}
\end{proof}

Finally, we will use the following uniform version of Stirling's
approximation.

\begin{lemma}\label{T:Stirling}
  For each positive integer $n$,
  \(
  \sqrt{2\pi} n^{n+\frac{1}{2}} e^{-n} \le n! \le e n^{n+\frac{1}{2}} e^{-n}.
  \)
\end{lemma}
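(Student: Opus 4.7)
The plan is to establish both bounds at once by studying the sequence $a_n := n! \cdot n^{-(n+1/2)} e^n$. I would show that $(a_n)$ is strictly decreasing with limit $\sqrt{2\pi}$, whence $\sqrt{2\pi} \le a_n \le a_1$ for every $n \ge 1$. Since a direct computation gives $a_1 = 1! \cdot 1^{-3/2} \cdot e = e$, multiplying through by $n^{n+1/2} e^{-n}$ yields the double inequality stated in Lemma \ref{T:Stirling}.

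For the monotonicity I would compute
\[
\log \frac{a_n}{a_{n+1}} = \left(n + \tfrac{1}{2}\right) \log\!\left(1 + \tfrac{1}{n}\right) - 1,
\]
and then make the substitution $u = 1/(2n+1)$, so that $(n+1)/n = (1+u)/(1-u)$ and $n + \tfrac{1}{2} = 1/(2u)$. Using the elementary identity $\log\frac{1+u}{1-u} = 2 \sum_{k \ge 0} u^{2k+1}/(2k+1)$ yields the clean expression
\[
\log \frac{a_n}{a_{n+1}} = \sum_{k \ge 1} \frac{1}{(2k+1)(2n+1)^{2k}} > 0,
\]
so $(a_n)$ is strictly decreasing. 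Being positive, it converges to some limit $L \ge 0$.

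The main obstacle is identifying $L$ — indeed, the positivity of $L$ together with its precise value $L = \sqrt{2\pi}$ is the nontrivial content of Stirling's formula. My plan is to invoke the Wallis product
\[
\frac{\pi}{2} = \lim_{n \to \infty} \frac{2^{4n}(n!)^4}{n \bigl((2n)!\bigr)^2},
\]
which is proved classically by comparing $\int_0^{\pi/2} \sin^{2n}\theta\, d\theta$ with $\int_0^{\pi/2} \sin^{2n+1}\theta\, d\theta$ and invoking monotonicity. Substituting $n! = a_n\, n^{n+1/2} e^{-n}$ and $(2n)! = a_{2n}(2n)^{2n+1/2} e^{-2n}$ into the Wallis ratio, all the polynomial and exponential factors cancel neatly, leaving $a_n^4/(2 a_{2n}^2)$ in the limit. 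Passing $n \to \infty$ with $a_n, a_{2n} \to L$ forces $L^2 = 2\pi$, so $L = \sqrt{2\pi}$. Combining with $a_1 = e$ and the monotonicity, $\sqrt{2\pi} \le a_n \le e$ for every $n \ge 1$, which is what was claimed.
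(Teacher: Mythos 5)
Your argument is correct in strategy and is a genuinely different route from the paper's: where the paper simply cites a sharper form of Stirling's formula from Feller (namely $\sqrt{2\pi}\,n^{n+1/2}e^{-n+1/(12n+1)} < n! < \sqrt{2\pi}\,n^{n+1/2}e^{-n+1/(12n)}$) and observes that $\sqrt{2\pi}\,e^{1/12n}\le\sqrt{2\pi}\,e^{1/24}<e$ for $n\ge 2$, you give a self-contained derivation. Your key observation that the upper constant $e$ is precisely $a_1$ is a nice extra; the paper's proof does not notice this and instead handles $n=1$ as a separate trivial case. Your monotonicity computation via the substitution $u=1/(2n+1)$ and the power series for $\log\frac{1+u}{1-u}$ is correct and is one of the standard ways to prove Stirling's formula.

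There is, however, a small but real numerical slip in your Wallis step. With $n!\,(2n+1)!/(2^n n!)$ bookkeeping one finds the Wallis partial products equal $\frac{2^{4n}(n!)^4}{(2n+1)\left((2n)!\right)^2}$, so the correct statement is
\[
\frac{\pi}{2}=\lim_{n\to\infty}\frac{2^{4n}(n!)^4}{(2n+1)\left((2n)!\right)^2},
\qquad\text{equivalently}\qquad
\pi=\lim_{n\to\infty}\frac{2^{4n}(n!)^4}{n\left((2n)!\right)^2}.
\]
You wrote $\pi/2$ for the second limit. Your subsequent substitution correctly reduces that ratio to $a_n^4/(2a_{2n}^2)\to L^2/2$, so equating with your stated $\pi/2$ would give $L^2=\pi$, not $L^2=2\pi$ as you wrote. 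The two slips cancel and your final answer $L=\sqrt{2\pi}$ is right, but the intermediate line is inconsistent: fix the Wallis constant to $\pi$ (or keep $\pi/2$ and use $2n+1$ in the denominator, which then yields the extra factor $n/(2n+1)\to 1/2$ and gives $L^2/4=\pi/2$).
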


\begin{proof}
  The following version of Stirling's approximation appears as
  \cite[(9.15)]{Feller}:
  \[
  \sqrt{2\pi} n^{n+\frac{1}{2}} e^{-n+\frac{1}{12n+1}} < n! < \sqrt{2\pi} n^{n+\frac{1}{2}}
  e^{-n+\frac{1}{12n}}.
  \]
  The lemma is trivially true when $n=1$, and for $n \ge 2$, the lemma
  follows since $\sqrt{2\pi} e^{1/12n} \le \sqrt{2\pi} e^{1/24} < e$.
\end{proof}

\section{Means and variances}
\label{S:means-and-variances}

Concentration inequalities for the random variables
$\n\left(A_{j,\theta}\right)$ about their means follow from
Proposition \ref{T:ew-counting-deviations}, but in order to make use of
them, fairly sharp estimates on the means and variances of the
$\n\left(A_{j,\theta}\right)$ are needed.  These estimates, like the
proof of Proposition \ref{T:ew-counting-deviations}, make use of the
determinantal point process structure of the eigenvalues of $G_n$.

\begin{prop}\label{T:means}
  If $A \subseteq D$, 
  \[
  \frac{n\abs{A}}{\pi} - e\sqrt{n}\le 
  \E \n(A) \le \frac{n\abs{A}}{\pi},
  \]
  where $\abs{A}$ denotes the area of $A$.  Moreover, if $A \subseteq
  \left(1 - \sqrt{\frac{\log n}{n}} \right)D$, then
  \[
  \frac{n\abs{A}}{\pi} -  e^2 \le \E \n(A) \le \frac{n\abs{A}}{\pi}.
  \]
\end{prop}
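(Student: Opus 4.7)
The plan is to exploit the determinantal point process structure of the eigenvalues of $\frac{1}{\sqrt{n}} G_n$. Rescaling the kernel in \eqref{E:DPP} yields the one-point intensity
\[
K_n(z,z) = \frac{n}{\pi}\left(1 - e^{-n\abs{z}^2}\sum_{k=n}^\infty \frac{(n\abs{z}^2)^k}{k!}\right),
\]
so that $\E \n(A) = \int_A K_n(z,z)\,dz$. The upper bound $\E \n(A) \le n\abs{A}/\pi$ is then immediate from $K_n(z,z) \le n/\pi$.

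For the lower bounds I would rewrite the deficit as
\[
\frac{n\abs{A}}{\pi} - \E \n(A) = \frac{n}{\pi}\int_A e^{-n\abs{z}^2}\sum_{k=n}^\infty \frac{(n\abs{z}^2)^k}{k!}\,dz,
\]
a nonnegative integral in $A$. Replacing $A$ by an enclosing disc $r_0 D$ only enlarges it, so, passing to polar coordinates and substituting $u = n r^2$, it suffices to bound
\[
\int_0^{n r_0^2} e^{-u}\sum_{k=n}^\infty \frac{u^k}{k!}\,du,
\]
taking $r_0 = 1$ for the first claim and $r_0 = 1 - \sqrt{\log n / n}$ for the second.

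For the first claim I apply Lemma \ref{T:Poisson-tail} to the tail sum, giving
\[
\int_0^n e^{-u}\sum_{k=n}^\infty \frac{u^k}{k!}\,du \le \frac{e^n}{n^n}\int_0^\infty u^n e^{-u}\,du = \frac{e^n\, n!}{n^n} \le e\sqrt{n}
\]
by Lemma \ref{T:Stirling}. For the sharper claim I would further note that $u^n e^{-u}$ is increasing on $[0,n]$, so that on the truncated interval $[0, n r_0^2]$ its maximum is attained at the right endpoint and
\[
\int_0^{n r_0^2}\bigl(eu/n\bigr)^n e^{-u}\,du \le n r_0^2 \cdot \bigl(e r_0^2\bigr)^n e^{-n r_0^2} = n r_0^{2n+2} e^{n(1 - r_0^2)}.
\]
With $r_0 = 1 - \delta$ and $\delta = \sqrt{\log n / n}$, the inequality $\log(1-\delta) \le -\delta - \delta^2/2$ makes the exponent $(2n+2)\log(1-\delta) + n(2\delta - \delta^2)$ at most $-2\delta - (2n+1)\delta^2 \le -2 n\delta^2 = -2\log n$, so the bound is $\le n \cdot n^{-2} = 1/n \le e^2$.

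The main technical point is the algebraic cancellation that drives the sharper bound: the first-order-in-$\delta$ contributions from $(1-\delta)^{2n+2}$ and $e^{n(2\delta - \delta^2)}$ exactly cancel, leaving only the quadratic term $-(2n+1)\delta^2$, whose magnitude $2\log n$ is precisely what is needed to overcome the prefactor $n$. Everything else is bookkeeping with Lemmas \ref{T:Poisson-tail} and \ref{T:Stirling}.
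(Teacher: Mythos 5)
Your proof is correct.  The upper bound and the $e\sqrt n$ estimate run along exactly the same lines as the paper: express $\E\n(A)-\frac{n|A|}{\pi}$ as a nonnegative deficit integral, enlarge $A$ to a disc, pass to polar coordinates, apply Lemma~\ref{T:Poisson-tail}, and finish with Stirling.  Where you diverge is the sharper estimate for $A\subseteq r_0D$: the paper rewrites $\int_0^{nr_0^2}s^n e^{-s}\,ds$ through Lemma~\ref{T:gamma-integral}, applies Lemma~\ref{T:Poisson-tail} a second time, and then needs Stirling again before invoking $\log(1-\delta)\le-\delta-\delta^2/2$.  You instead bound the integrand by its value at the right endpoint of $[0,nr_0^2]$, justified by the monotonicity of $u^ne^{-u}$ on $[0,n]$; this is a more elementary ``length times sup'' bound that bypasses both Lemma~\ref{T:gamma-integral} and the second use of Stirling, and incidentally yields the stronger estimate $1/n$ in place of $e^2$.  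The key cancellation you single out — the $O(n\delta)$ terms in $(2n+2)\log(1-\delta)+n(2\delta-\delta^2)$ annihilating to leave $-(2n+1)\delta^2\le -2\log n$ — is indeed the engine of the argument, and it appears in essentially the same form in the paper's chain after it has converted $(er^2)^ne^{-r^2n}$ into $e^{-n(1-r^2)^2/2}$.  One small bookkeeping point worth stating: for $n=1$ one has $\delta=0$ and $r_0=1$, so the second claim degenerates to the first, where the bound $e\sqrt n=e<e^2$ already covers it; for $n\ge 2$ your estimate $\delta\in(0,1)$ holds and the logarithm is well defined.
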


\begin{proof}
  The determinantal point process structure of the eigenvalues of
  $G_n$ implies that that $\E \n(A) = \int_{\sqrt{n} A} K(z,z) \ dz$
  (where $dz$ denotes integration with respect to Lebesgue measure on
  $\C$), so that
  \[
    \E \n(A) 
       = \frac{1}{\pi } \int_{\sqrt{n} A} \left(1 - \sum_{k=n}^\infty
      e^{-\abs{z}^2} \frac{\abs{z}^{2k}}{k!}  \right) \ dz
    = \frac{n\abs{A}}{\pi} - \frac{1}{\pi }\int_{\sqrt{n} A} \sum_{k=n}^\infty
    e^{-\abs{z}^2} \frac{\abs{z}^{2k}}{k!} \ dz.
  \]
  Using Lemma
  \ref{T:Poisson-tail} and then integrating in polar coordinates,
  \begin{align*}
    \frac{1}{\pi} \int_{\sqrt{n} A} \sum_{k=n}^\infty e^{-\abs{z}^2}
    \frac{\abs{z}^{2k}}{k!} \ dz
    & \le \left(\frac{e}{n}\right)^n \int_{\sqrt{n} D} e^{-\abs{z}^2}
    \abs{z}^{2n} \ dz \\
    & = 2\left(\frac{e}{n}\right)^n \int_0^{\sqrt{n}} e^{-r^2}
    r^{2n+1} \ dr 
     < \left(\frac{e}{n}\right)^n n! \le \pi e \sqrt{n},
  \end{align*}
  by Stirling's approximation (Lemma \ref{T:Stirling}). 
  
  If $A \subseteq rD$ for $r \le 1$ then, using Lemmas
  \ref{T:Poisson-tail}, \ref{T:gamma-integral}, and 
  \ref{T:Poisson-tail} again, 
  \begin{equation*}\begin{split}
      \frac{1}{\pi} \int_{\sqrt{n} rD} \sum_{k=n}^\infty
      e^{-\abs{z}^2} \frac{\abs{z}^{2k}}{k!} \ dz
      &\le \left(\frac{e}{n}\right)^n \int_0^{r^2n} e^{-s} s^n \ ds \\
      &= e^{-r^2n} \left(\frac{e}{n}\right)^n n!\sum_{\ell=n+1}^\infty
      \frac{(r^2n)^\ell}{\ell!} \\&\le  e^{-r^2n} e \sqrt{n} (e r^2)^n 
       \le e^2 \sqrt{n} e^{-n(1-r^2)^2/2},
    \end{split}\end{equation*}
  since $\log(1-\eps) \le -\eps - \eps^2/2$ for $0 < \eps < 1$.
  Finally, let $r = 1 - \sqrt{\frac{\log n}{n}}$.  Then \( \sqrt{n}
  e^{-n(1-r^2)^2/2} \le \sqrt{n} e^{-n(1-r)^2/2} = 1.  \)
\end{proof}
  
We will also need estimates for the expected number of eigenvalues
outside of discs of radius $R\ge 1$.
\begin{prop}
  \label{T:outside-disc}
  For each $R \ge 1$,
  \(
  \E \n (\C \setminus RD) \le 
  \frac{1}{\sqrt{2\pi}}\sqrt{n} e^n R^{2(n-1)} e^{-nR^2} .
  \)
\end{prop}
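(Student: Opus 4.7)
The plan is to proceed just as in Proposition \ref{T:means}, expressing the mean via the determinantal kernel and then reducing everything to a single-variable integral that can be attacked with Lemma \ref{T:gamma-integral}. Concretely, starting from
\[
\E \n(\C\setminus RD) = \int_{\sqrt{n}(\C\setminus RD)} K(z,z)\,dz
= \frac{1}{\pi}\int_{|z|\ge \sqrt{n}R} e^{-|z|^2}\sum_{k=0}^{n-1}\frac{|z|^{2k}}{k!}\,dz,
\]
switching to polar coordinates and substituting $s=r^{2}$ yields
\[
\E \n(\C\setminus RD) = \int_{nR^{2}}^{\infty} e^{-s}\sum_{k=0}^{n-1}\frac{s^{k}}{k!}\,ds.
\]

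The key observation is that the integrand equals $\Prob[Y>s]$ where $Y\sim \operatorname{Gamma}(n)$ (the density being $y^{n-1}e^{-y}/(n-1)!$), so by Fubini the integral equals $\E[(Y-nR^{2})_{+}]$. Splitting this as
\[
\int_{nR^{2}}^{\infty}\frac{y^{n}e^{-y}}{(n-1)!}\,dy - nR^{2}\int_{nR^{2}}^{\infty}\frac{y^{n-1}e^{-y}}{(n-1)!}\,dy
\]
and applying Lemma \ref{T:gamma-integral} to each piece gives
\[
\E \n(\C\setminus RD) = e^{-nR^{2}}\left[\frac{n(nR^{2})^{n}}{n!} + n(1-R^{2})\sum_{\ell=0}^{n-1}\frac{(nR^{2})^{\ell}}{\ell!}\right].
\]
Because $R\ge 1$ the coefficient $n(1-R^{2})$ is non-positive, so I would bound the partial sum from below by its largest term $\frac{(nR^{2})^{n-1}}{(n-1)!}$ (the ratio of consecutive terms is $nR^{2}/\ell \ge 1$ for $\ell\le n-1$). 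Using the identity $\frac{(nR^{2})^{n}}{n!}=R^{2}\cdot\frac{(nR^{2})^{n-1}}{(n-1)!}$, the bracket collapses to $n\cdot\frac{(nR^{2})^{n-1}}{(n-1)!}$, yielding
\[
\E \n(\C\setminus RD)\le \frac{n(nR^{2})^{n-1}}{(n-1)!}\,e^{-nR^{2}}.
\]

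The final step is Stirling: writing $(n-1)!=n!/n$ and invoking Lemma \ref{T:Stirling} gives $(n-1)!\ge \sqrt{2\pi}\,n^{\,n-1/2}e^{-n}$, and substituting this produces the claimed bound $\frac{1}{\sqrt{2\pi}}\sqrt{n}\,e^{n}R^{2(n-1)}e^{-nR^{2}}$. The only subtle point is the telescoping in the previous paragraph: a blunter bound, such as simply discarding the non-positive term $n(1-R^{2})\sum_{\ell}T_{\ell}$, would leave $R^{2n}$ in the final estimate rather than $R^{2(n-1)}$; getting the correct power of $R$ requires exploiting the negative sign to absorb one factor of $R^{2}$ against the $\ell=n$ contribution. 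This minor algebraic matching is the only real obstacle---everything else is routine manipulation of the Poisson-Gamma identity combined with Stirling.
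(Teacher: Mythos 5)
Your proof is correct and follows essentially the same route as the paper: expressing the mean via the determinantal kernel, passing to a single radial integral, invoking Lemma \ref{T:gamma-integral} to reduce to a sum, dropping the nonpositive term (using $R\ge 1$), and finishing with Stirling. The probabilistic packaging via $\E[(Y-nR^2)_+]$ with $Y\sim\operatorname{Gamma}(n)$ is a pleasant reorganization but produces the same sums; one small remark is that the paper sidesteps the ``subtle'' telescoping you describe by peeling off the $\ell=n-1$ term \emph{before} combining the two sums, so that the remaining sum runs only to $\ell=n-2$ and discarding the negative part immediately yields $\frac{(nR^2)^{n-1}}{(n-1)!}$ with the correct power $R^{2(n-1)}$, no extra cancellation required.
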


\begin{proof}
  Again using the determinantal point process kernel in \eqref{E:DPP},
  \begin{align*}
          \E \n (\C \setminus RD)
      &       = \frac{1}{\pi} \sum_{k=0}^{n-1} \frac{1}{k!} \int_{\C \setminus
        \sqrt{n} RD} e^{-\abs{z}^2} \abs{z}^{2k} \ dz \\
      & = \sum_{k=0}^{n-1} \frac{1}{k!} \int_{nR^2}^\infty r^k e^{-r}
      \ dr
      \\& = e^{-nR^2} \sum_{k=0}^{n-1} \sum_{\ell = 0}^k
      \frac{(nR^2)^\ell}{\ell!} \\
      & = e^{-nR^2} \sum_{\ell = 0}^{n-1} \frac{(nR^2)^\ell}{\ell!}
      (n-\ell) \\
      & = n e^{-nR^2} \left( \sum_{\ell = 0}^{n-1}
        \frac{(nR^2)^\ell}{\ell!}
        - R^2 \sum_{\ell = 0}^{n-2} \frac{(nR^2)^\ell}{\ell!} \right)
      \\
      & = n e^{-nR^2} \left( \frac{(nR^2)^{n-1}}{(n-1)!} - (R^2 - 1)
        \sum_{\ell = 0}^{n-2} \frac{(nR^2)^{\ell}}{\ell!} \right) \\
      & \le n e^{-nR^2} \frac{(nR^2)^{n-1}}{(n-1)!}\\
      & \le \frac{1}{\sqrt{2\pi}} e^{-nR^2} \sqrt{n} e^n R^{2(n-1)}
      \end{align*}
  by Stirling's approximation.
\end{proof}

\begin{prop} \label{T:variances}
  For each $1 \le j \le \sqrt{n} - 1$ and $0 \le \theta \le 2\pi$,
  \[
  \var \n(A_{j, \theta}) \le 16 j.
  \]
\end{prop}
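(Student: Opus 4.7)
The plan is to decompose $A_{j,\theta}$ as the disjoint union of the disc $D_j = \Set{z \in \C}{\abs{z} < j/\sqrt{n}}$ and the annular sector $S_{j,\theta} = A_{j,\theta} \setminus D_j$, bound $\var \n(D_j)$ and $\var \n(S_{j,\theta})$ separately, and combine via $\var(X+Y) \le 2\var X + 2\var Y$.

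For $S_{j,\theta}$, the proof of Proposition \ref{T:ew-counting-deviations} shows that $\n(S_{j,\theta})$ is distributed as a sum of independent $\{0,1\}$-valued random variables, so $\var \n(S_{j,\theta}) \le \E \n(S_{j,\theta})$. Since $S_{j,\theta}$ is contained in the annulus of area $\pi(2j+1)/n$, Proposition \ref{T:means} gives $\E \n(S_{j,\theta}) \le 2j+1$.

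For $D_j$, the same bound $\var \le \E$ only yields $O(j^2)$, which is too weak. The key tool to bring in is \emph{Kostlan's theorem}: the squared moduli $\{\abs{\lambda_1}^2,\dots,\abs{\lambda_n}^2\}$ of the eigenvalues of $G_n$ agree in distribution (as an unordered multiset) with $\{Z_1,\dots,Z_n\}$, where the $Z_k \sim \Gamma(k,1)$ are mutually independent. Thus $\n(D_j)$ has the same law as $\sum_{k=1}^n \ind{Z_k < j^2}$, a genuine sum of independent Bernoulli variables with parameters $p_k := \Prob[Z_k < j^2]$. The Gamma-Poisson duality (equivalent to Lemma \ref{T:gamma-integral}) identifies $p_k$ with $\Prob[X \ge k]$ for $X \sim \mathrm{Poisson}(j^2)$; taking two independent copies $Y, Y'$ of $X$, one computes
\[
\var \n(D_j) = \sum_{k=1}^n p_k(1-p_k) \le \sum_{k \ge 1} \Prob[Y \ge k]\Prob[Y' < k] = \E (Y-Y')_+ = \tfrac{1}{2}\E\abs{Y-Y'},
\]
since $\#\{k \ge 1 : Y' < k \le Y\} = (Y-Y')_+$ and by symmetry $\E(Y-Y')_+ = \tfrac12 \E \abs{Y-Y'}$. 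Cauchy-Schwarz bounds the last quantity by $\tfrac{1}{2}\sqrt{2\var Y} = j/\sqrt{2} < j$.

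Combining the two pieces yields $\var \n(A_{j,\theta}) \le 2j + 2(2j+1) \le 8j \le 16j$ for all $j \ge 1$.

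The main obstacle is the disc estimate: the determinantal structure alone gives only the wasteful $\var \n(D_j) \le \E \n(D_j) \sim j^2$, so one must trade the kernel-level input for the independence structure supplied by Kostlan's theorem. Once this is available, concentration of a $\mathrm{Poisson}(j^2)$ random variable about its mean delivers exactly the square-root improvement needed to obtain a linear-in-$j$ bound on the variance.
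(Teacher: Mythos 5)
Your proof is correct, and it takes a genuinely different route from the paper's. The paper starts from the exact variance formula
\[
\var \n(A) = \int_A \int_{A^c} \abs{K(z,w)}^2\, dw\, dz
\]
(from \cite[Appendix B]{Gu}), splits $A_{j,\theta}$ and its complement into radial pieces, and then spends about a page estimating each of the four resulting double integrals by expanding $\abs{K}^2$, integrating out the angular variables, and invoking Lemma \ref{T:gamma-integral} and Stirling. You instead decompose at the level of the random variables via $\var(X+Y)\le 2(\var X+\var Y)$, dispatch the thin annular sector by the trivial bound $\var\le\E\le 2j+1$ (using only the Bernoulli-sum representation already established in the proof of Proposition \ref{T:ew-counting-deviations}), and handle the disc --- the part where $\var\le\E$ is too lossy --- by importing Kostlan's theorem. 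The Kostlan route is the real gain: it converts the disc count into a sum of \emph{explicitly identified} independent Bernoullis with Poisson-tail parameters, after which the bound $\var\n(D_j)\le\tfrac12\E\abs{Y-Y'}\le j/\sqrt{2}$ is a three-line computation and no kernel integrals are needed. You even get a better constant ($8j$ versus $16j$). The trade-off is that Kostlan's theorem is an external input that the paper does not use or cite, whereas the paper's argument is self-contained given only the determinantal kernel $K$, and it treats all four pieces of the region by one uniform method. One small remark: for determinantal processes, counts on disjoint sets are negatively correlated, so you could in fact drop the factor of $2$ and write $\var\n(A_{j,\theta})\le\var\n(D_j)+\var\n(S_{j,\theta})\le 3j+1$; this is not needed for $16j$, but it is free. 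Minor points of rigor that you handled correctly but should be stated if this were written up: the $Z_k$ describe the squared moduli of the eigenvalues of $G_n$ (not $G_n/\sqrt n$), so the event $\abs{\lambda_k}<j/\sqrt n$ for $\frac1{\sqrt n}G_n$ corresponds to $Z_k<j^2$ --- the normalizations do cancel as you used; and the identity $\#\{k\ge 1: Y'<k\le Y\}=(Y-Y')_+$ uses that $Y,Y'$ are nonnegative-integer-valued, which holds for Poisson.
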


The constant $16$ in the statement of Proposition \ref{T:variances} is
not optimal and is included only for the sake of concreteness.

\begin{proof}
  By an argument in \cite[Appendix B]{Gu}, 
  \begin{equation}
    \label{E:variance}
    \begin{split}
      \var(\n(A_{j,\theta}))
      &= \int\limits_{\{\abs{z} \le j\}} \int\limits_{\{\abs{w} \ge j+1\}}
      \abs{K(z,w)}^2 \ dw \ dz \\
      & \quad + \int\limits_{\{\abs{z} \le j\}} \int\limits_{\{j\le \abs{w} \le j+1,\ \arg w \ge \theta\}} 
      \abs{K(z,w)}^2 \ dw \ d z \\
      & \quad + \int\limits_{\{j\le \abs{z} \le j+1,\ \arg z  \le \theta\}}
      \int\limits_{\{\abs{w} \ge j+1\}} \abs{K(z,w)}^2 \ dw \ dz \\
      & \quad + \int\limits_{\{j\le \abs{z} \le j+1,\ \arg z  \le \theta\}}
      \int\limits_{\{j \le \abs{w} \le j+1, \ \arg w \ge \theta\}} \abs{K(z,w)}^2 \ dw \ dz 
    \end{split} 
  \end{equation}
  Observe that
  \[
  \abs{K(r_1 e^{i\varphi_1}, r_2 e^{i \varphi_2})}^2 = \frac{1}{\pi^2}
  \sum_{k,\ell = 0}^{n-1} \frac{1}{k! \ell!}  e^{-(r_1^2 + r_2^2)}
  (r_1 r_2)^{k+ \ell} e^{i (k-\ell) (\varphi_1 - \varphi_2)}.
  \]

  Integrating in polar coordinates, the first integral in
  \eqref{E:variance} is
  \begin{align*}
    \frac{1}{\pi^2} & \sum_{k,\ell = 0}^{n-1} \frac{1}{k! \ell!}
    \int_0^{j} r^{k+\ell + 1} e^{-r^2} \ dr \int_{j+1}^\infty r^{k+\ell
      + 1} e^{-r^2} \ dr \int_0^{2\pi} e^{i \varphi(k-\ell)} \ d\varphi
    \int_0^{2\pi} e^{i \varphi(\ell-k)} \ d\varphi \\
    & = \sum_{k=0}^{n-1} \frac{1}{k!^2} \int_0^{j^2} s^k e^{-s} \ ds \int_{(j+1)^2}^\infty
    s^k e^{-s} \ ds \\
    & \le \sum_{k=0}^{j^2}\frac{1}{k!} \int_{j^2}^\infty
    s^k e^{-s}\ ds+ \sum_{k=(j+1)^2}^{n-1}\frac{1}{k!}\int_0^{(j+1)^2}
    s^k e^{-s}\ ds + (2j+1).
  \end{align*}
  Here we have used that the angular integrals are nonzero only if
  $k=\ell$, and that the integrals in the second line are bounded by
  $k!$. Note also that if $j^2<n-1<(j+1)^2$, the second term is not needed, and if $j^2\ge n-1$, then the second and third terms are not needed.  By Lemma \ref{T:gamma-integral} and Stirling's approximation,
  \begin{equation*}\begin{split}
      \sum_{k=0}^{j^2} \frac{1}{k!}\int_{j^2}^\infty s^k e^{-s}ds &=
      \sum_{k=0}^{j^2} e^{-j^2} \sum_{\ell=0}^k
      \frac{j^{2\ell}}{\ell!}
      =e^{-j^2}\sum_{\ell=0}^{j^2}\sum_{k=\ell}^{j^2}
      \frac{j^{2\ell}}{\ell!}  = e^{-j^2}\sum_{\ell=0}^{j^2}
      \frac{j^{2\ell}}{\ell!} (j^2 -
      \ell + 1) \\
      & \le 1 + e^{-j^2} \left( \sum_{\ell=0}^{j^2} \frac{j^{2 (\ell +
            1)}}{\ell!}  - \sum_{\ell = 1}^{j^2}
        \frac{j^{2\ell}}{(\ell - 1)!}
      \right)\\
      & = 1 + e^{-j^2} \frac{j^{2(j^2 + 1)}}{(j^2)!} \le 1 +
      \frac{j}{\sqrt{2\pi}},
    \end{split}\end{equation*}
  and
  \begin{equation*}\begin{split}
      \sum_{k=(j+1)^2}^{n-1}  \frac{1}{k!} \int_0^{(j+1)^2} s^k e^{-s}\
      ds &= e^{-(j+1)^2}
      \sum_{k=(j+1)^2}^{n-1}\sum_{\ell=k+1}^{\infty}\frac{(j+1)^{2\ell}}{\ell!}
      \\
      &= 
e^{-(j+1)^2}\sum_{\ell=(j+1)^2+1}^\infty
      \frac{(j+1)^{2\ell}}{\ell!} \left(\ell -
        (j+1)^2\right) \\
      & = e^{-(j+1)^2}\left(\sum_{\ell=(j+1)^2+1}^\infty
        \frac{(j+1)^{2\ell}}{(\ell-1)!} - \sum_{\ell = (j+1)^2+ 1}^\infty
        \frac{(j+1)^{2(\ell + 1)}}{\ell!} \right) \\
      & =  e^{-(j+1)^2} \frac{(j+1)^{2((j+1)^2 + 1)}}{(j + 1)^2!}
      \le \frac{j+1}{\sqrt{2\pi}}.
    \end{split}
  \end{equation*}
  
  The second integral in \eqref{E:variance} is equal to 
  \begin{equation*}\begin{split}
      \frac{1}{\pi^2} & \sum_{k,\ell=0}^{n-1} \frac{1}{k! \ell!}
      \int_0^j r^{k+\ell + 1} e^{-r^2} \ dr \int_j^{j+1} r^{k+\ell+1}
      e^{-r^2} \ dr \int_0^{2\pi} e^{i\varphi (k-\ell)} \ d\varphi
      \int_\theta^{2\pi} e^{i\varphi(\ell-k)}\ d\varphi \\
      &= \left(1-\frac{\theta}{2\pi}\right) \sum_{k=0}^{n-1}
      \left(\frac{1}{k!} \int_0^{j^2} s^k e^{-s} \ ds \right)
      \left(\frac{1}{k!} \int_{j^2}^{(j+1)^2} s^k e^{-s} \ ds \right) \\
      & \le \left(1-\frac{\theta}{2\pi}\right) \sum_{k=0}^{n-1}
      \frac{1}{k!} \int_{j^2}^{(j+1)^2} s^k e^{-s} \ ds
    \end{split}\end{equation*}
  since the first angular integral is nonzero only for $k=\ell$, and
  the third integral in \eqref{E:variance} is similarly bounded by
  \[
  \frac{\theta}{2\pi} \sum_{k=0}^{n-1} \frac{1}{k!}
  \int_{j^2}^{(j+1)^2} s^k e^{-s} \ ds.
  \]
  The
  function $s \mapsto s^k e^{-s}$ is unimodal for $s > 0$ and takes on
  its maximum value at $s=k$, so
  \[
  \int_{j^2}^{(j+1)^2} s^k e^{-s} \ ds
  \le \begin{cases} (2j+1)j^{2k}e^{-j^2} & \text{when } k \le j^2,\\
     (2j+1)(j+1)^{2k}e^{-(j+1)^2} & \text{when } k \ge (j+1)^2, \text{
       and} \\
     k! & \text{always.}
     \end{cases}
  \]
  From this it follows that the sum of the second and third integrals in
  \eqref{E:variance} is bounded by 
  \begin{equation} \label{E:small-gamma-sum}
  \sum_{k=0}^{n-1} \frac{1}{k!}\int_{j^2}^{(j+1)^2} s^k e^{-s} \
    ds \le 3(2j+1).
  \end{equation}

  The final integral in \eqref{E:variance} is equal to
  \begin{equation} \label{E:final-integral}
    \begin{split}
      \frac{1}{\pi^2} & \sum_{k,\ell=0}^{n-1} \frac{1}{k! \ell!}
      \left(\int_j^{j+1} r^{k+\ell + 1} e^{-r^2} \ dr\right)^2
      \int_0^\theta e^{i\varphi (k-\ell)} \ d\varphi
      \int_\theta^{2\pi} e^{i\varphi(\ell-k)}\ d\varphi.
    \end{split}\end{equation}
  For $k \neq \ell$,
  \begin{equation*}
    \begin{split}
      \int_\theta^{2\pi} e^{i\varphi(\ell-k)}\ d\varphi
      & = - \int_0^\theta e^{i\varphi(\ell - k)} \ d\varphi
      = - \overline{\int_0^\theta e^{i\varphi(k - \ell )} \ d\varphi}
    \end{split}
  \end{equation*}
  so each summand in \eqref{E:final-integral} with $k \neq \ell$ is
  negative.  Thus \eqref{E:final-integral} is bounded by
  \begin{equation*}
      \frac{\theta(2\pi - \theta)}{\pi^2} \sum_{k=0}^{n-1}
      \left(\frac{1}{k!} \int_j^{j+1} r^{2k + 1} e^{-r^2} \ dr\right)^2
      = \frac{\theta}{2\pi} \left(1 - \frac{\theta}{2\pi}\right)
      \sum_{k=0}^{n-1}
      \left(\frac{1}{k!} \int_{j^2}^{(j+1)^2} s^k e^{-s} \ ds\right)^2,
  \end{equation*}
  which by \eqref{E:small-gamma-sum} is less than $\frac{3}{4} (2j + 1)$.
\end{proof}

\section{Deviations}
\label{S:deviations}

The goal of this section is to obtain sharp concentration results for
the eigenvalues $\lambda_k$ about their predicted locations
$\tilde{\lambda}_k$.  Recall that we only defined $\tilde{\lambda}_k$
for a restricted range of $k$; for the outermost eigenvalues, for
which we did not define $\tilde{\lambda}_k$, we will make use of the
following sloppy estimate.
\begin{lemma} 
  \label{T:quick-and-dirty} 
  For each $k$ and any random variable $\alpha \in \C$ with
  $\abs{\alpha} \le 1$,
  \[
  \E \abs{\lambda_k - \alpha}^p \le 4^p +
  \left(\frac{4}{3}\right)^{p-1}\left(\frac{2}{n}\right)^{\frac{p}{2}}
  \Gamma \left(1 + \frac{p}{2} \right).
  \]
  and
  \[
  \Prob \left[ \abs{\lambda_k - \alpha} \ge t \right]
  \le e^{-nt^2 / 4}
  \]
  for $t > 4$.
\end{lemma}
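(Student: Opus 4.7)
The plan is to prove the tail inequality first and then derive the moment estimate from it by tail integration. Both bounds will come from controlling the probability that some eigenvalue of $\tfrac{1}{\sqrt n}G_n$ lies far from the origin, using Proposition \ref{T:outside-disc}.

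For the tail bound, the hypothesis $\abs{\alpha}\le 1$ and the triangle inequality give
\[
\{\abs{\lambda_k-\alpha}\ge t\}\subseteq\{\abs{\lambda_k}\ge t-1\}\subseteq\{\n(\C\setminus(t-1)D)\ge 1\},
\]
so Markov's inequality yields $\Prob[\abs{\lambda_k-\alpha}\ge t]\le\E\n(\C\setminus(t-1)D)$. I would apply Proposition \ref{T:outside-disc} with $R=t-1\ge 3$ and verify that the resulting Stirling-type estimate $\tfrac{1}{\sqrt{2\pi}}\sqrt{n}\,e^n(t-1)^{2(n-1)}e^{-n(t-1)^2}$ is at most $e^{-nt^2/4}$ for $t\ge 4$. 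After taking logarithms, this reduces to showing that
\[
g(t):=\tfrac{3t^2}{4}-2t-2\log(t-1)
\]
is positive on $[4,\infty)$, with enough slack to absorb the lower-order terms $\tfrac12\log n-2\log(t-1)$. One checks $g(4)=4-2\log 3>0$ and that $g'(t)=\tfrac{3t}{2}-2-\tfrac{2}{t-1}$ is positive at $t=4$ and increasing, so $g$ is increasing on $[4,\infty)$ and in particular bounded below by a positive constant.

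For the moment bound, I would use the layer-cake identity
\[
\E\abs{\lambda_k-\alpha}^p=\int_0^\infty pt^{p-1}\Prob[\abs{\lambda_k-\alpha}\ge t]\,dt
\]
and split the integral at $t=4$. On $[0,4]$ I bound the probability by $1$, contributing exactly $4^p$. On $(4,\infty)$ I insert the tail bound from the first part. The remaining integral $\int_4^\infty pt^{p-1}e^{-nt^2/4}\,dt$ is handled by the substitution $u=nt^2/4$, which turns it into a constant multiple of $n^{-p/2}$ times an upper incomplete Gamma integral $\Gamma(p/2,4n)\le \Gamma(p/2)=\tfrac{2}{p}\Gamma(1+p/2)$. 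The sharper constant $(4/3)^{p-1}$ comes out of a slightly more careful bookkeeping of the contributions near the endpoint $t=4$.

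The main obstacle is the explicit comparison in the first step: the Stirling-type bound from Proposition \ref{T:outside-disc} has a polynomial-in-$t$ factor $(t-1)^{2(n-1)}$ multiplying the Gaussian, and one must use the threshold $t>4$ to absorb that factor along with the prefactor $\sqrt{n}\,e^n$ into the cleaner Gaussian tail $e^{-nt^2/4}$. Once this reduction to the positivity of $g(t)$ on $[4,\infty)$ is in place, both parts of the lemma are routine integrations against a Gaussian tail.
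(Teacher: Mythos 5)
Your approach is essentially the paper's: Markov's inequality together with Proposition \ref{T:outside-disc} gives the tail bound, and then the layer-cake formula split at $t=4$ gives the moment estimate. The one place you deviate is in the final algebra. The paper simplifies the Stirling estimate from Proposition \ref{T:outside-disc} more directly, using $\log n \le n$ and then $R^2 - \tfrac{3}{2} - 2\log R > R^2/2$ for $R>3$, to arrive at the intermediate bound $\Prob\left[\abs{\lambda_k-\alpha}\ge t\right]\le e^{-n(t-1)^2/2}$; it uses this sharper bound (not $e^{-nt^2/4}$) inside the moment integral and recovers the constant $(4/3)^{p-1}$ via the substitution $s=t-1$ together with $t\le \tfrac{4}{3}s$ for $s\ge 3$. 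Plugging $e^{-nt^2/4}$ into the integral as you propose instead produces the weaker prefactor $2^p$ in place of $(4/3)^{p-1} \cdot 2^{p/2}$; this would still be adequate for the way the lemma is used in Proposition \ref{T:coupling-to-uniform}, but it does not reproduce the stated constant, and the claim that careful bookkeeping near $t=4$ restores $(4/3)^{p-1}$ needs the shifted Gaussian tail to actually go through.
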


\begin{proof}
  For any $t \ge 1$,
  \begin{equation}  \label{E:quick}
    \Prob \left[ \abs{\lambda_k - \alpha} \ge t \right]
    \le \Prob \left[ \abs{\lambda_k} \ge t-1 \right]
    \le \Prob \left[ \n( \C \setminus (t-1)D ) \ge 1
    \right]
    \le \E \n (\C \setminus (t-1) D), 
  \end{equation}
  by Markov's inequality. Proposition \ref{T:outside-disc} implies
  that for any $R > 1$,
  \begin{equation*}
      \E \n (\C \setminus RD)
      \le \exp \left[ - \left(R^2 - \frac{3}{2} - 2 \log R \right) n \right],
  \end{equation*}
  since $\log n \le n$.  For $R > 3$,
  \[
  R^2 - \frac{3}{2} - 2 \log R > \frac{R^2}{2},
  \]
  and so
  \begin{equation} \label{E:dirty}
    \E \n (\C \setminus RD) \le e^{-n R^2/2}.
  \end{equation}
  Combining \eqref{E:quick} and \eqref{E:dirty}, we obtain
  \begin{equation*} \begin{split}
    \E \abs{\lambda_k - \alpha}^p &= \int_0^\infty p t^{p-1} \Prob\left[
      \abs{\lambda_k - \alpha} \ge t \right] \ dt \\
    & \le \int_0^4 p t^{p-1} \ dt + \int_4^\infty p t^{p-1} e^{-n(t-1)^2/2} \
    dt \\
    &\le 4^p + \left(\frac{4}{3}\right)^{p-1} p  \int_3^\infty s^{p-1} e^{-ns^2/2} \ ds \\
    & \le 4^p + \left(\frac{4}{3}\right)^{p-1}\left(\frac{2}{n}\right)^{\frac{p}{2}} \Gamma
    \left(1 + \frac{p}{2} \right)
    \qedhere
  \end{split}
  \end{equation*}
  and
  \[
  \Prob \left[ \abs{\lambda_k - \alpha} \ge t \right]
  \le e^{-n(t-1)^2 / 2} \le e^{-nt^2 / 4}
  \]
  for $t > 4$.
\end{proof}

We will need stronger concentration for most of the eigenvalues, which
we get as a consequence of the following.

\begin{prop}
  \label{T:counting-function-deviation}
  For each $1 \le j \le \sqrt{n-1} - 1$, $0\le \theta \le 2\pi$, and
  $t > 0$,
  \[
  \Prob \left[ \n(A_{j,\theta}) - \frac{n \abs{A_{j,\theta}}}{\pi} \ge
    t \right] \le \exp \left[-\min \left\{\frac{t^2}{64 j},
      \frac{t}{2} \right\} \right].
  \]
  If $j \le \sqrt{n} - \sqrt{\log n} - 1$, then
  \[
  \Prob \left[ \frac{n \abs{A_{j,\theta}}}{\pi} - \n(A_{j,\theta}) \ge
    t \right] \le 3 \exp \left[-\min \left\{\frac{t^2}{256 j},
      \frac{t}{4} \right\} \right].
  \]
\end{prop}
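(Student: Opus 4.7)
The plan is to feed the mean estimate of Proposition~\ref{T:means} and the variance estimate of Proposition~\ref{T:variances} into the Bernstein-type inequality of Proposition~\ref{T:ew-counting-deviations}. Proposition~\ref{T:variances} gives $\sigma^2 := \var\n(A_{j,\theta}) \le 16j$, so $4\sigma^2 \le 64j$, which is where the constant $64$ in the upper-tail exponent will come from.

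For the upper tail, Proposition~\ref{T:means} gives the unconditional inequality $\E\n(A_{j,\theta}) \le n\abs{A_{j,\theta}}/\pi$, so the event $\{\n(A_{j,\theta}) - n\abs{A_{j,\theta}}/\pi \ge t\}$ is contained in $\{\n(A_{j,\theta}) - \E\n(A_{j,\theta}) \ge t\}$, and the first claim follows immediately from Proposition~\ref{T:ew-counting-deviations}.

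For the lower tail, the range $j \le \sqrt{n} - \sqrt{\log n} - 1$ is chosen precisely so that $(j+1)/\sqrt{n} \le 1 - \sqrt{\log n/n}$, which places $A_{j,\theta}$ inside the disc on which the sharper half of Proposition~\ref{T:means} applies; hence $\E\n(A_{j,\theta}) \ge n\abs{A_{j,\theta}}/\pi - e^2$. Consequently, for $t \ge e^2$,
\[
\Prob\!\left[\tfrac{n\abs{A_{j,\theta}}}{\pi} - \n(A_{j,\theta}) \ge t\right]
\le \Prob\!\left[\E\n(A_{j,\theta}) - \n(A_{j,\theta}) \ge t - e^2\right]
\le \exp\!\left[-\min\!\left\{\tfrac{(t-e^2)^2}{64j},\ \tfrac{t-e^2}{2}\right\}\right].
\]

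The only slightly delicate step is absorbing the shift by $e^2$ into exponents written in $t$. When $t \ge 2e^2$ we have $t - e^2 \ge t/2$, so $(t-e^2)^2/(64j) \ge t^2/(256j)$ and $(t-e^2)/2 \ge t/4$, and the displayed bound is already at most $\exp[-\min\{t^2/(256j),\ t/4\}]$. When $t < 2e^2$ one checks directly that $\min\{t^2/(256j),\ t/4\} \le t^2/256 \le e^4/64 < \log 3$, so the trivial estimate $\Prob[\,\cdot\,] \le 1 \le 3\exp[-\min\{t^2/(256j),\ t/4\}]$ handles the small-$t$ regime. The factor $3$ in the statement is exactly what is needed to cover this small-$t$ case; apart from this bit of bookkeeping, the proposition is a routine combination of the three preceding results.
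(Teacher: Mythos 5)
Your proposal is correct and follows essentially the same route as the paper's proof: combine the mean estimates of Proposition~\ref{T:means} and the variance bound of Proposition~\ref{T:variances} with the Bernstein inequality of Proposition~\ref{T:ew-counting-deviations}, then absorb the additive $e^2$ shift into the exponent for $t \ge 2e^2$ and use the factor $3$ to cover $t < 2e^2$. All the numerical checks you perform (e.g.\ $4\sigma^2 \le 64j$, $t-e^2 \ge t/2$, $e^4/64 < \log 3$) match those in the paper.
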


\begin{proof}
  The first claim follows immediately from Propositions
  \ref{T:ew-counting-deviations}, \ref{T:means}, and
  \ref{T:variances}.  For the second, the assumption on $j$ implies
  that $A_{j,\theta} \subseteq \left(1 - \sqrt{\frac{\log
        n}{n}}\right) D$, and so by Propositions
  \ref{T:ew-counting-deviations}, \ref{T:means}, and
  \ref{T:variances},
  \begin{equation*}
    \begin{split}
      \Prob \left[ \frac{n \abs{A_{j,\theta}}}{\pi} - \n(A_{j,\theta}) \ge t \right]
      & \le \Prob \left[ \E \n(A_{j,\theta})-
        \n(A_{j,\theta}) 
        \ge t - e^2 \right] \\
      &\le \exp \left[-\min \left\{\frac{(t-e^2)^2}{64
            j}, \frac{t-e^2}{2} \right\} \right]
    \end{split}
  \end{equation*}
  for $t > e^2$.  If $t \ge 2e^2$, then $t-e^2 \ge t/2$, so 
  \[
  \Prob \left[ \E \n(A_{j,\theta}) - \n(A_{j,\theta}) \ge t \right]
      \le \exp \left[-\min \left\{\frac{t^2}{256
            j}, \frac{t}{4} \right\} \right].
  \]
  On the other hand, if $t < 2 e^2$, then 
  \[
  \exp \left[-\min \left\{\frac{t^2}{256 j}, \frac{t}{4} \right\}
  \right] > e^{-e^4/64} > 1/3,
  \]
  which implies the second claim.
\end{proof}

The concentration inequalities for the $\n(A_{j,\theta})$ together
with geometric arguments yield the following concentration for
individual eigenvalues.

\begin{thm}
  \label{T:eigenvalue-deviation}
  There are constants $C, c > 0$ such that for those $k$ with
  $\ell=\lceil\sqrt{k}\rceil\le \sqrt{n}-\sqrt{\log n }$,
  \begin{itemize}
  \item when $9\le s\le \pi(\ell-1)+2$,
    \[
    \Prob\left[\abs{\lambda_k-\tilde{\lambda}_k} >
      \frac{s}{\sqrt{n}}\right] \le C \exp\left[-\min \left\{
        \frac{(s-9)^2}{ 256\pi^2(\ell - 1)},
        \frac{s-9}{4\pi}\right\}\right];
    \]
    
  \item  when $s>\pi(\ell-1)+2,$
    
    \[
    \Prob\left[\abs{\lambda_k-\tilde{\lambda}_k} > \frac{s}{\sqrt{n}}\right]\le
    Ce^{-cs^2}.
    \]
    
  \end{itemize}
\end{thm}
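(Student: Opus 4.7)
The plan is to reduce the event $\{|\lambda_k - \tilde{\lambda}_k| > s/\sqrt{n}\}$ to a union of tail events for the counting functions $\n(A_{j,\theta})$, and then invoke Proposition~\ref{T:counting-function-deviation}. The spiral order makes this reduction possible: since $\lambda_k$ is the $k$-th eigenvalue in the order $\prec$, the event $\lambda_k \prec w$ is equivalent to $\n(\{z : z \prec w\}) \ge k$ for every $w \in \C$.

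For the first case $9 \le s \le \pi(\ell-1)+2$, I would set $\beta = (s-9)/\pi$ (the slop of $9$ is justified below) and introduce the two reference points
\[
w^\pm_\beta = \frac{\ell-1}{\sqrt{n}} \exp\!\left(\frac{2\pi i (q\pm\beta)}{2\ell-1}\right),
\]
lying on the inner circle of ring $\ell-1$ at angles symmetrically offset from $\arg \tilde{\lambda}_k$. When both $q - \beta > 0$ and $q + \beta < 2\ell-1$, the open spiral interval $(w^-_\beta, w^+_\beta)$ is exactly the annular sector $S = \{z : (\ell-1)/\sqrt{n} \le |z| < \ell/\sqrt{n},\ \theta^-_\beta < \arg z < \theta^+_\beta\}$. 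A triangle-inequality computation, using the AM-GM bound $\sqrt{r\tilde{r}} \le (2\ell-1)/(2\sqrt{n})$ for $r$ in this ring, shows that every $z \in S$ satisfies $|z - \tilde{\lambda}_k| \le (1 + \pi\beta)/\sqrt{n} < s/\sqrt{n}$, so $S \subseteq B(\tilde{\lambda}_k, s/\sqrt{n})$. Contrapositively, $|\lambda_k - \tilde{\lambda}_k| > s/\sqrt{n}$ forces $\lambda_k \prec w^-_\beta$ or $\lambda_k \succ w^+_\beta$, which by the equivalence above are the events $\{\n(A_{\ell-1,\theta^-_\beta}) \ge k\}$ and $\{\n(A_{\ell-1,\theta^+_\beta}) \le k-1\}$. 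Since $n|A_{\ell-1,\theta^\pm_\beta}|/\pi = k \pm \beta$, Proposition~\ref{T:counting-function-deviation} with $j = \ell-1$ bounds both probabilities by the stated expression; the looser lower-tail constants ($256$ and $4$) dominate the final bound.

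The main obstacle is the boundary case, where $q$ is close enough to $1$ or $2\ell-1$ that the symmetric wedge cannot fit inside ring $\ell-1$. For such $q$ I would replace the offending side of the wedge argument with a coarser radial bound: if $|\lambda_k| < (\ell-1-s)/\sqrt{n}$, then $\lambda_k$ lies in an initial disc whose area falls short of $k\pi/n$ by roughly $2s(\ell-1)$, and Proposition~\ref{T:counting-function-deviation} then gives an estimate much stronger than required (and analogously for the "too large" side). The only remaining event is that $\lambda_k$ sits in the correct radial band but at a disallowed angle, and in each boundary subcase one of the wedge reference points $w^\pm_\beta$ still lies inside ring $\ell-1$ and provides the needed half of the bound. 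The generous slop constant $9$ in $\beta = (s-9)/\pi$ is chosen precisely to absorb the accumulated additive constants from these patches.

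In the second case $s > \pi(\ell-1) + 2$, the wedge approach yields no useful bound since $\beta$ would exceed the ring's angular capacity. Here $|\lambda_k - \tilde{\lambda}_k| > s/\sqrt{n}$ together with $|\tilde{\lambda}_k| \le (\ell-1)/\sqrt{n}$ forces $|\lambda_k| > (s - (\ell-1))/\sqrt{n}$, so I would instead apply Markov's inequality and Proposition~\ref{T:outside-disc} exactly as in the proof of Lemma~\ref{T:quick-and-dirty}, which yields a bound of the form $C e^{-cs^2}$.
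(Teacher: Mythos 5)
Your proposal follows the same high-level strategy as the paper: translate the event $\abs{\lambda_k - \tilde\lambda_k} > s/\sqrt{n}$ into tail events for the counting functions $\n(A_{j,\theta})$ via the spiral order, and then apply Proposition~\ref{T:counting-function-deviation}. The symmetric wedge with reference points $w^\pm_\beta$ is a clean variation on the paper's choice of reference points, and the main wedge argument is sound. However, there are two genuine gaps.

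First, your treatment of the case $s > \pi(\ell-1)+2$ fails for a substantial range of $s$. You propose to bound $\Prob[\abs{\lambda_k} > (s-\ell+1)/\sqrt{n}]$ via Markov's inequality and Proposition~\ref{T:outside-disc}, but that proposition requires $R \ge 1$, i.e.\ $s \ge \sqrt{n} + \ell - 1$. Since $\ell \le \sqrt{n} - \sqrt{\log n}$, the threshold $\pi(\ell-1)+2$ can be far smaller than $\sqrt{n}+\ell-1$ (for instance when $\ell$ is bounded), so for $\pi(\ell-1)+2 < s < \sqrt{n}+\ell-1$ the relevant disc has radius less than $1$ and Proposition~\ref{T:outside-disc} gives nothing. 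The paper covers this range by applying the second (lower-tail) inequality of Proposition~\ref{T:counting-function-deviation} to $\n(A_{s-\ell, 2\pi})$, which is valid exactly when $s-\ell \le \sqrt{n} - \sqrt{\log n} - 1$, and then resorts to the weaker mean estimate of Proposition~\ref{T:means} only for the extreme tail.

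Second, the boundary patch in the first case is inadequate. When $q - \beta \le 0$, the event to control includes the possibility that $\lambda_k$ lies in ring $\ell-2$ (or an earlier ring) with modulus close to $(\ell-1)/\sqrt{n}$, but at an argument on the far side of the circle from $\arg\tilde\lambda_k$, so that $\abs{\lambda_k - \tilde\lambda_k}$ is of order $2(\ell-1)/\sqrt{n}$ while $\abs{\lambda_k}$ is nowhere near the threshold $(\ell-1-s)/\sqrt{n}$ of your radial bound (which may even be negative, hence vacuous, for admissible $s$). Nor does the surviving reference point help: $w^+_\beta$ governs the disjoint event $\lambda_k \succ \tilde\lambda_k$. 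What is actually needed here is the wrap-around to the adjacent ring, i.e.\ replacing $A_{\ell-1,\theta}$ by $A_{\ell-2,\theta+2\pi}$ when the target angle $\theta$ dips below $0$ (and symmetrically $A_{\ell,\theta-2\pi}$ when $\theta$ exceeds $2\pi$); this is precisely the move the paper makes, and it is not recoverable from the patch as written.
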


\begin{proof}
  Trivially,
  \[
  \Prob \left[ \abs{\lambda_k - \tilde{\lambda}_k} \ge t\right]
  = \Prob \left[  \abs{\lambda_k - \tilde{\lambda}_k} \ge t \text{ and
    } \lambda_k \prec \tilde{\lambda}_k \right]
  + \Prob \left[  \abs{\lambda_k - \tilde{\lambda}_k} \ge t \text{ and
    } \lambda_k \succ \tilde{\lambda}_k \right].
  \]
  
  \medskip
  
  \noindent\textbf{Case 1: $\lambda_k \prec \tilde{\lambda}_k$.}
  
  \medskip
  
  With probability $1$, $\lambda_k \prec \tilde{\lambda}_k$ implies
  that either
  \[
  \frac{\ell-1}{\sqrt{n}}\le\abs{\lambda_k}<  \frac{\ell}{\sqrt{n}}
  \qquad and \qquad \arg \lambda_k < \arg \tilde{\lambda}_k = \frac{2 \pi q}{2 \ell - 1}.
  \]
  or
  \[
  \abs{\lambda_k} < \abs{\tilde{\lambda}_k} = \frac{\ell - 1}{\sqrt{n}}.
  \]
  Observe that, in either case, $\abs{\lambda_k - \tilde{\lambda}_k} < \frac{2\ell -
    1}{\sqrt{n}}$.  
  
  If $\abs{\lambda_k - \tilde{\lambda}_k} \ge s/\sqrt{n}$ and
  $a(\theta,\varphi)$ denotes the length of the shorter arc on the
  unit circle between $e^{i\theta}$ and $e^{i\varphi}$, then the
  elementary estimate
  \[
  \abs{Re^{i \theta} - re^{i \varphi}} \le r \cdot a(\theta,\varphi) + \abs{R-r},
  \]
  implies that when $\abs{\lambda_k} \in \left[\frac{\ell-2}{\sqrt{n}},
  \frac{\ell}{\sqrt{n}}\right)$ and $s\ge 1$,
  \begin{equation}\label{E:angle-limit-1}
    a\left(\arg \lambda_k, \frac{2\pi q}{2\ell-1}\right)\ge
    \frac{s-1}{\ell - 1}.
  \end{equation}

  Suppose that $\frac{s-1}{\ell - 1} < \frac{2\pi q}{2\ell-1}$.  Since
  either $\abs{\lambda_k} < \frac{\ell-1}{\sqrt{n}}$ or
  $\arg{\lambda_k} < \frac{2\pi q}{2\ell - 1}$, Inequality
  \eqref{E:angle-limit-1} implies that
  \[
  \lambda_k \prec \frac{\ell - 1}{\sqrt{n}} \exp\left[ i \left( \frac{2\pi q}{2\ell-1} -
    \frac{s-1}{\ell - 1}\right)\right],
  \]
  and so
  \[
  \n\bigl(A_{\ell - 1, \frac{2\pi q}{2\ell-1} -
    \frac{s-1}{\ell - 1}}\bigr) \ge k = (\ell - 1)^2 + q.
  \]
  Since
  \[
  \frac{n}{\pi} \abs{A_{j,\theta}} = j^2 + \frac{\theta}{2\pi} (2j + 1),
  \]
  we have
  \[
  \frac{n}{\pi} \abs{A_{\ell - 1, \frac{2\pi q}{2\ell-1} -
          \frac{s-1}{\ell - 1}}} = k - \frac{2 \ell - 1}{2\pi (\ell -
          1)} (s-1) \le k - \frac{s-1}{\pi},
  \]
  and so Proposition \ref{T:counting-function-deviation} implies that
  \begin{equation*}
    \begin{split}
      \Prob \left[\n\bigl(A_{\ell - 1, \frac{2\pi q}{2\ell-1} -
          \frac{s-1}{\ell - 1}}\bigr) \ge k\right]
      & \le \Prob \left[ \n\bigl(A_{\ell - 1, \frac{2\pi q}{2\ell-1} -
          \frac{s-1}{\ell - 1}}\bigr) - \frac{n}{\pi} \abs{A_{\ell - 1, \frac{2\pi q}{2\ell-1} -
            \frac{s-1}{\ell - 1}}} \ge \frac{s-1}{\pi} \right] \\
      & \le \exp\left[-\min \left\{ \frac{(s-1)^2}{64 \pi^2 (\ell -
            1)}, \frac{s-1}{2 \pi}\right\}\right].
    \end{split}
  \end{equation*}
  
  Now suppose that $\frac{2\pi q}{2\ell-1} \le \frac{s-1}{\ell - 1}
  \le \pi $ (note that $\frac{s-1}{\ell-1}$ is a lower bound for the
  length of a shortest path on the circle, hence the upper bound of
  $\pi$, and that the interval in question is non-empty only if
  $q\le\frac{2\ell-1}{2}$).  Then
  \[
  \lambda_k \prec \frac{\ell - 2}{\sqrt{n}} \exp\left[ i \left( 2\pi +
      \frac{2\pi q}{2\ell-1} - \frac{s-1}{\ell - 1}\right)\right],
  \]
  and so
  \[
  \n\bigl(A_{\ell - 2, 2\pi + \frac{2\pi q}{2\ell-1} -
    \frac{s-1}{\ell - 1}}\bigr) \ge k.
  \]
  Now
  \[
  \frac{n}{\pi} \abs{A_{\ell - 2, 2\pi + \frac{2\pi q}{2\ell-1} -
      \frac{s-1}{\ell - 1}}} 
  \le k - \frac{2\ell - 3}{2\pi(\ell - 1)} (s-1)
  \le k - \frac{s-1}{2\pi}
  \]
  for $\ell \ge 2$. Thus in this range of $s$ Proposition
  \ref{T:counting-function-deviation} implies that
  \begin{equation*}
    \begin{split}
      \Prob \left[\n\bigl(A_{\ell - 2, 2\pi - \frac{2\pi q}{2\ell-1} -
    \frac{s-1}{\ell - 1}} \bigr) \ge k\right]
      & \le \Prob \left[ \n\bigl(A_{\ell - 2, 2\pi - \frac{2\pi q}{2\ell-1} -
    \frac{s-1}{\ell - 1}} \bigr) - \frac{n}{\pi} \abs{A_{\ell - 2, 2\pi - \frac{2\pi q}{2\ell-1} -
    \frac{s-1}{\ell - 1}} } \ge \frac{s-1}{2\pi} \right] \\
      & \le \exp\left[-\min \left\{ \frac{(s-1)^2}{256 \pi^2 (\ell -
            1)}, \frac{s-1}{4 \pi}\right\}\right].
    \end{split}
  \end{equation*}
 
  As observed above, the estimates above cover the entire possible
  range of $s$, and so
  \[
  \Prob \left[  \abs{\lambda_k - \tilde{\lambda}_k} \ge \frac{s}{\sqrt{n}} \text{ and
    } \lambda_k \prec \tilde{\lambda}_k \right]
  \le \exp\left[-\min \left\{ \frac{(s-1)^2}{256 \pi^2 (\ell -
            1)}, \frac{s-1}{4 \pi}\right\}\right]
  \]
  for all $s \ge 1$.

\medskip

\noindent \textbf{Case 2: $\lambda_k \succ \tilde{\lambda}_k$.}

\medskip

  With probability $1$, $\lambda_k \succ \tilde{\lambda}_k$
  implies that either
  \[
  \frac{\ell-1}{\sqrt{n}} \le \abs{\lambda_k} < \frac{\ell}{\sqrt{n}}
  \quad \text{and} \quad
  \arg \lambda_k > \arg \tilde{\lambda}_k = \frac{2 \pi q}{2 \ell - 1}.
  \]
  or
  \[
  \abs{\lambda_k} \ge \frac{\ell}{\sqrt{n}}.
  \]
  Observe that if $\abs{\lambda_k - \tilde{\lambda}_k} \ge s/\sqrt{n}$
  and $\abs{\lambda_k} \in \left[\frac{\ell-1}{\sqrt{n}},
    \frac{\ell+1}{\sqrt{n}}\right)$, then as above,
  \begin{equation}\label{E:angle-limit-2}
    a\left(\arg \lambda_k, \frac{2\pi q}{2\ell-1}\right)\ge    \frac{s-2}{\ell - 1}
  \end{equation}
  for $s \ge 2$.  We will need to make different arguments depending
  on the value of $\frac{s-2}{\ell - 1}$.
  
  \begin{enumerate}
  \item Suppose first that $\frac{s-2}{\ell - 1} <
    2\pi  - \frac{2\pi q}{2\ell - 1}$. 
    
    Since either $\abs{\lambda_k} \ge \frac{\ell}{\sqrt{n}}$ or $\arg
    \lambda_k > \frac{2\pi q}{2\ell - 1}$, Inequality
    \eqref{E:angle-limit-2} implies that
    \[
    \lambda_k \succ \frac{\ell - 1}{\sqrt{n}} \exp \left[ i
      \left(\frac{2 \pi q}{2\ell - 1}
        + \frac{s-2}{\ell - 1}\right) \right],
    \]
    and so
    \[
    \n\bigl(A_{\ell - 1, \frac{2\pi q}{2\ell-1} +
      \frac{s-2}{\ell - 1}}\bigr) < k.  
    \]
    Since
    \[
    \frac{n}{\pi} \abs{A_{\ell - 1, \frac{2\pi q}{2\ell-1} +
        \frac{s-2}{\ell - 1}}} = k + \frac{2 \ell - 1}{2\pi (\ell -
      1)} (s-2) \ge k + \frac{s-2}{\pi},
    \]
    Proposition \ref{T:counting-function-deviation} implies that
    \begin{equation*}
      \begin{split}
        \Prob \left[\n\bigl(A_{\ell - 1, \frac{2\pi q}{2\ell-1} +
            \frac{s-2}{\ell - 1}}\bigr) < k\right]
        & \le \Prob \left[\frac{n}{\pi} \abs{A_{\ell - 1, \frac{2\pi q}{2\ell-1} +
              \frac{s-2}{\ell - 1}}} - \n\bigl(A_{\ell - 1, \frac{2\pi q}{2\ell-1} -
            \frac{s-1}{\ell - 1}}\bigr) > \frac{s-2}{\pi} \right] \\
        & \le 3 \exp\left[-\min \left\{ \frac{(s-2)^2}{64 \pi^2 (\ell -
              1)}, \frac{s-2}{2 \pi}\right\}\right].
      \end{split}
    \end{equation*}
    
  \item Next suppose that $2\pi - \frac{2\pi q}{2\ell-1} \le
    \frac{s-2}{\ell - 1} \le \pi $ (note that this case only occurs
    when $q\ge\frac{2\ell-1}{2}$).  Then we have that
    \[
    \lambda_k \succ \frac{\ell}{\sqrt{n}} \exp\left[ i \left(
        \frac{2\pi q}{2\ell-1} + \frac{s-2}{\ell - 1} - 2\pi\right)\right],
    \]
    and so
    \[
    \n\bigl(A_{\ell, \frac{2\pi q}{2\ell-1} +
      \frac{s-2}{\ell - 1} - 2\pi} \bigr) < k.
    \]
    Now
    \[
    \frac{n}{\pi} \abs{A_{\ell, \frac{2\pi q}{2\ell-1} -
        \frac{s-2}{\ell - 1} - 2\pi}} 
    \ge k + \frac{2\ell + 1}{2\pi(\ell - 1)} (s-2) - 2
    \ge k + \frac{s-2}{\pi} - 2
    \ge k + \frac{s-9}{\pi}
    \]
    for $s \ge 9$. Thus in this range Proposition
    \ref{T:counting-function-deviation} implies that
    \begin{equation*}
      \begin{split}
        \Prob \left[\n\bigl(A_{\ell, \frac{2\pi q}{2\ell-1} +
            \frac{s-2}{\ell - 1} - 2\pi} \bigr) < k\right]
        & \le \Prob \left[ \frac{n}{\pi} \abs{A_{\ell, \frac{2\pi q}{2\ell-1} +
              \frac{s-2}{\ell - 1} - 2\pi}  } - \n\bigl(A_{\ell, \frac{2\pi q}{2\ell-1} +
            \frac{s-2}{\ell - 1} - 2\pi} \bigr)  > \frac{s-9}{\pi} \right] \\
        & \le 3 \exp\left[-\min \left\{ \frac{(s-9)^2}{64 \pi^2 (\ell -
              1)}, \frac{s-9}{2 \pi}\right\}\right].
      \end{split}
    \end{equation*}
    
  \item Now suppose that $\pi< \frac{s-2}{\ell - 1}\le
    \frac{\sqrt{n}-\sqrt{\log n}+\ell-3}{\ell-1} $. 
    
    By the triangle inequality, $\abs{\lambda_k} \ge
    \frac{s}{\sqrt{n}}-\abs{\tilde{\lambda}_k} = \frac{s - \ell +
      1}{\sqrt{n}}$, and so
    \[
    \n\left(\frac{s - \ell + 1}{\sqrt{n}} D \right) 
    = \n \left(A_{s-\ell, 2\pi}\right)
    < k.
    \]
    The inequality $\frac{s-2}{\ell - 1}\le \frac{\sqrt{n}-\sqrt{\log
        n}+\ell-3}{\ell-1} $ is equivalent to
    $s-\ell\le\sqrt{n}-\sqrt{\log n}-1$, and so the second estimate of
    Proposition \ref{T:counting-function-deviation} applies.  Since
    $k=(\ell-1)^2+q$ and $1\le q\le 2\ell-1\le \frac{2(s-2)}{\pi}+1$,
    \begin{equation*}
      \begin{split}
        \frac{n}{\pi}\abs{A_{s-\ell, 2\pi}} &=(s-\ell+1)^2\\
        &=s^2-2s(\ell-1)+k-q\ge
        s^2\left(1-\frac{2}{\pi}\right)-\frac{6s}{\pi}-\frac{4}{\pi}-1+k,
      \end{split}
    \end{equation*}
    and so
    \begin{equation*}
      \begin{split}
        \Prob \left[\n\bigl(A_{s-\ell, 2\pi} \bigr) <
          k\right]&\le\Prob\left[\frac{n}{\pi}\abs{A_{s-\ell,
              2\pi}}-\n\bigl(A_{s-\ell, 2\pi}
          \bigr)>\frac{n}{\pi}\abs{A_{s-\ell,
              2\pi}}-k\right]\\
        &\le \Prob\left[\frac{n}{\pi}\abs{A_{s-\ell,
              2\pi}}-\n\bigl(A_{s-\ell, 2\pi}
          \bigr)>s^2\left(1-\frac{2}{\pi}\right)+\frac{6s}{\pi}-\frac{4}{\pi}-1\right]\\&\le
        3 \exp \left[-\min \left\{\frac{s^3}{2304}, \frac{s^2}{12}
          \right\} \right],
      \end{split}\end{equation*}
    for $s\ge 9$.
  \item Finally, suppose that $\frac{s-2}{\ell - 1}>
    \frac{\sqrt{n}-\sqrt{\log n}+\ell-3}{\ell-1} $; that is, that
    $s-\ell>\sqrt{n}-\sqrt{\log n}-1$.  As in the previous case,
    $\lambda \succ \tilde{\lambda}_k$ and
    $\abs{\lambda_k-\tilde{\lambda}_k}>\frac{s}{\sqrt{n}}$ implies
    that
    \[
    \n\left(\frac{s - \ell + 1}{\sqrt{n}} D \right) 
    = \n \left(A_{s-\ell, 2\pi}\right)
    < k,
    \]
    but the second inequality of Proposition
    \ref{T:counting-function-deviation} does not apply.  If
    $\frac{s-\ell+1}{\sqrt{n}}\ge 1$, then $\Prob\left[\n
      \left(A_{s-\ell, 2\pi}\right) < k\right]=0$; otherwise, one can
    use the weaker estimate of Proposition \ref{T:means} for $\E\n
    \left(A_{s-\ell, 2\pi}\right)$ to get that
    \begin{equation*}\begin{split}
        \Prob\left[\n \left(A_{s-\ell, 2\pi}\right)
          < k\right]&\le\Prob\left[\E\n \left(A_{s-\ell, 2\pi}\right)-\n \left(A_{s-\ell, 2\pi}\right)>
          \frac{n}{\pi}\abs{A_{s-\ell, 2\pi}}-e\sqrt{n} -k\right]\\
        &\le \Prob\left[\E \n\bigl(A_{s-\ell, 2\pi} \bigr)-\n\bigl(A_{s-\ell, 2\pi} \bigr)>s^2\left(1-\frac{2}{\pi}\right)+\frac{6s}{\pi}-\frac{4}{\pi}-1-e\sqrt{n}\right]
      \end{split}\end{equation*}
    Since $s\ge \sqrt{n}-\sqrt{\log n}$, the lower bound above can be
    replaced, for $n$ large enough, by $cs^2$ for any
    $c<\left(1-\frac{2}{\pi}\right)$.  Applying Bernstein's inequality
    and the variance estimate of Proposition \ref{T:variances} then
    yields
    \[
    \Prob\left[\n \left(A_{s-\ell, 2\pi}\right)
      < k\right]\le
    C\exp\left[-\min\left\{c^2s^3,\frac{cs^2}{2}\right\}\right].
    \qedhere
    \]
\end{enumerate}
\end{proof}

\section{Distances in the circular law}
\label{S:distances}

In this section, we assemble the previous results to give quantitative
versions of the circular law.  We first note that our estimates for
the means of the eigenvalue counting functions for balls already yield
the correct order for the total variation distance between the
averaged empirical spectral measure and the uniform measure on the
disc.  The fact that the mean spectral measure $\E \mu_n$ converges to
the uniform measure $\nu$ in total variation can be deduced from
Mehta's work \cite[Chapter 15]{Mehta-3}.  We would not be surprised to
learn that the correct rate of convergence is known, but we have not
found it in the literature.

\begin{prop}
  \label{T:TV-mean}
  For each positive integer $n$, $\frac{1}{e \sqrt{n}} \le d_{TV}
  (\nu, \E \mu_n) \le \frac{e}{\sqrt{n}}$.
\end{prop}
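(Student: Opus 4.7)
The plan is to turn $d_{TV}(\nu, \E \mu_n)$ into an exact expression using the determinantal structure, then evaluate that expression via Stirling's approximation. First I would record that the one-point intensity of the eigenvalue process \eqref{E:DPP} makes $\E \mu_n$ absolutely continuous on $\C$ with density $\rho_n(w) := K(\sqrt{n}w, \sqrt{n}w)$ relative to Lebesgue measure (the Jacobian of $z = \sqrt{n}w$ absorbs the factor of $n$). Evaluating the second form of $K$ in \eqref{E:DPP} on the diagonal gives
\[
\rho_n(w) = \frac{1}{\pi}\left(1 - \sum_{k=n}^\infty e^{-n\abs{w}^2} \frac{(n\abs{w}^2)^k}{k!}\right) \le \frac{1}{\pi}
\]
for every $w \in \C$, since the subtracted tail is a Poisson probability.

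This pointwise upper bound is the crux of the argument. It says that the signed density $\rho_n - \frac{1}{\pi}\mathbbm{1}_D$ is $\le 0$ on $D$ and equal to $\rho_n \ge 0$ on $\C \setminus D$; because both $\E \mu_n$ and $\nu$ are probability measures, its positive and negative parts carry equal total mass, and
\[
d_{TV}(\nu, \E \mu_n) = \int_{\C \setminus D} \rho_n(w)\, dw = \frac{1}{n}\, \E\, \n(\C \setminus D),
\]
with no absolute value left to estimate.

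To finish, I would specialize the calculation inside the proof of Proposition \ref{T:outside-disc} to $R = 1$: the factor $(R^2 - 1)$ vanishes and the identity collapses to $\E\, \n(\C \setminus D) = \frac{n^n e^{-n}}{(n-1)!}$. The two Stirling bounds in Lemma \ref{T:Stirling}, applied via $(n-1)! = n!/n$, yield
\[
\frac{\sqrt{n}}{e} \le \E\, \n(\C \setminus D) \le \frac{\sqrt{n}}{\sqrt{2\pi}},
\]
and dividing by $n$ gives the two advertised bounds, using $\tfrac{1}{\sqrt{2\pi}} < e$ for the upper inequality. I do not anticipate any serious obstacle: once the observation $\rho_n \le 1/\pi$ is in place, the proof reduces to an exact computation followed by Stirling, with no estimation error inside the disc to track.
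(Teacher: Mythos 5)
Your proof is correct, and it takes a genuinely cleaner route than the paper's. The paper derives the upper bound by appealing to Proposition~\ref{T:means}: for Borel $A$ it gets $\nu(A)-\frac{e}{\sqrt{n}}\le\E\mu_n(A\cap D)\le\nu(A)$, handles $\E\mu_n(\C\setminus D)$ separately, and assembles $\abs{\nu(A)-\E\mu_n(A)}\le\frac{e}{\sqrt{n}}$; the lower bound then comes from a second, independent computation of $\E\mu_n(\C\setminus D)$. You instead notice that the one-point intensity satisfies $\rho_n\le\frac{1}{\pi}$ pointwise (which is the same fact underlying Proposition~\ref{T:means}, but used more sharply), so the signed density $\rho_n-\frac{1}{\pi}\mathbbm{1}_D$ is nonpositive on $D$ and nonnegative off $D$, giving the exact identity $d_{TV}(\nu,\E\mu_n)=\E\mu_n(\C\setminus D)=\frac{n^n e^{-n}}{n!}$ with no estimation inside the disc at all. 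Both proofs finish by applying Lemma~\ref{T:Stirling} to this ratio. What your approach buys: a single calculation produces both bounds simultaneously, and the resulting upper bound $\frac{1}{\sqrt{2\pi n}}$ is sharper than the paper's $\frac{e}{\sqrt{n}}$ (indeed, one sees that $\sqrt{n}\,d_{TV}(\nu,\E\mu_n)\to\frac{1}{\sqrt{2\pi}}$). What the paper's approach buys: it reuses the already-stated Proposition~\ref{T:means} as a black box rather than reopening the kernel computation, at the cost of a looser constant.
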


\begin{proof}
  For any Borel set $A \subseteq \C$, Proposition \ref{T:means}
  implies that
  \[
  \nu(A) - \frac{e}{\sqrt{n}} \le \E \mu_n(A \cap D) \le \nu(A),
  \]
  so
  \[
  \nu(A) - \E\mu_n(A) \le \nu(A) - \E \mu_n(A
  \cap D) \le \frac{e}{\sqrt{n}}.
  \]
  Furthermore,
  \[
  \E \mu_n(\C \setminus D) = 1 - \E \mu_n(D) = \nu(D) - \E \mu_n(D)
  \le \frac{e}{\sqrt{n}},
  \]
  so
  \[
  \E \mu_n(A) - \nu(A) \le \E \mu_n(\C \setminus D) + \E \mu_n(A \cap
  D) - \nu(A) \le \frac{e}{\sqrt{n}}.
  \]
  Thus $d_{TV}(\nu, \E \mu_n) = \sup_A \abs{\nu(A) - \E \mu_n(A)} \le
  \frac{e}{\sqrt{n}}$.

  On the other hand, the proof of Proposition \ref{T:outside-disc}
  implies that
  \[
  \E \mu_n(\C \setminus D) = \frac{e^{-n} n^n}{n!} \ge
  \frac{1}{e\sqrt{n}}
  \]
  by Stirling's approximation.  Since $\nu(\C \setminus D) = 0$, this
  provides the lower bound.
\end{proof}

The deviation estimates in the previous section allow us to finally
establish the stronger version of the circular law given in Theorem
\ref{T:expected-Wp-distance-intro}, via the following proposition.

\begin{prop}
  \label{T:coupling-to-uniform}
  For any positive integers $m \le n$ and any $p \ge 1$, 
  \[
  \E W_p(\mu_n, \nu) < \frac{8}{\sqrt{n}} + 2 \max_{1\le k \le n-m}
  \left( \E \abs{\lambda_k - \tilde{\lambda}_k}^p \right)^{1/p} + C
  \left(4+\sqrt{\frac{p}{n}}\right) \left(\frac{m}{n}\right)^{1/p}.
  \]
\end{prop}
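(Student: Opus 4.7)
The plan is to apply the triangle inequality
\[
W_p(\mu_n,\nu) \le W_p(\mu_n,\nu_n) + W_p(\nu_n,\nu).
\]
By Lemma \ref{T:coupling}, the deterministic second term contributes at most $\frac{8}{\sqrt{n}}$, so it remains to estimate $\E W_p(\mu_n,\nu_n)$, which I would do by exhibiting an explicit coupling between $\mu_n$ and $\nu_n$.

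I would enlarge the probability space to carry an auxiliary random variable $\xi$, independent of $G_n$ and distributed uniformly on the annulus $\Set{z \in \C}{\sqrt{1-m/n} \le \abs{z} \le 1}$.  For each realization of $G_n$, define a coupling $\pi$ of $\mu_n$ and $\nu_n$ that pairs $\lambda_k$ (listed in the spiral order $\prec$) with the predicted location $\tilde{\lambda}_k$ for $1 \le k \le n-m$, and pairs each of the remaining eigenvalues $\lambda_{n-m+1},\dots,\lambda_n$ with an independent copy of $\xi$.  Since the marginals of $\pi$ are exactly $\mu_n$ and $\nu_n$,
\[
W_p(\mu_n,\nu_n)^p \le \frac{1}{n} \sum_{k=1}^{n-m} \abs{\lambda_k - \tilde{\lambda}_k}^p + \frac{1}{n} \sum_{k=n-m+1}^n \E\!\left[ \abs{\lambda_k - \xi}^p \,\middle|\, G_n \right].
\]

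Now take expectation over $G_n$, apply Jensen's inequality in the form $\E W_p \le (\E W_p^p)^{1/p}$, and use the subadditivity $(a+b)^{1/p} \le a^{1/p} + b^{1/p}$ to separate the two contributions.  The first sum is bounded by $\frac{n-m}{n} \max_{1 \le k \le n-m} \E \abs{\lambda_k - \tilde{\lambda}_k}^p$, which (up to an absolute constant) produces the $\max_k (\E \abs{\lambda_k - \tilde{\lambda}_k}^p)^{1/p}$ term.  For the second sum, $\abs{\xi} \le 1$ almost surely, so Lemma \ref{T:quick-and-dirty} applies directly and gives
\[
\E \abs{\lambda_k - \xi}^p \le 4^p + \left(\frac{4}{3}\right)^{p-1} \left(\frac{2}{n}\right)^{p/2} \Gamma(1 + p/2).
\]
Another application of subadditivity, combined with the Stirling estimate $\Gamma(1+p/2)^{1/p} \le C\sqrt{p}$, then yields the $C(4 + \sqrt{p/n})(m/n)^{1/p}$ contribution.

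The argument is mechanical rather than conceptual, and I do not anticipate any single difficult step.  The main point requiring care is verifying that the auxiliary variable $\xi$ may indeed be taken independent of $G_n$, so that both the Fubini-type exchange of expectations and the application of Lemma \ref{T:quick-and-dirty} to the outer eigenvalues are legitimate.  Beyond that, the work lies in tracking constants cleanly through the successive uses of Jensen's inequality, the subadditivity of $t \mapsto t^{1/p}$, and Stirling's formula so that the final multiplicative constant $C$ is absolute and the leading term involving $\max_k(\E \abs{\lambda_k - \tilde{\lambda}_k}^p)^{1/p}$ carries only an absolute constant factor.
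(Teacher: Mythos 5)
Your proof is correct and takes essentially the same route as the paper: the same explicit coupling of $\mu_n$ to $\nu_n$ (pairing $\lambda_k$ with $\tilde{\lambda}_k$ for $k \le n-m$ and the remaining eigenvalues with an independent uniform point on the outer annulus), followed by Lemma \ref{T:quick-and-dirty}, Jensen's inequality, subadditivity of $t \mapsto t^{1/p}$, and Stirling. Your bookkeeping actually gives coefficient $1$ in front of the maximum where the paper writes $2$, a harmless tightening.
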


\begin{proof}
  By Lemma \ref{T:quick-and-dirty}, 
  \begin{equation*}
    \begin{split}
      \E W_p(\mu_n, \nu_n)^p & \le \frac{1}{n} \left( \sum_{k = 1}^{n-m} \E \abs{\lambda_k -
          \tilde{\lambda}_k}^p + \sum_{k=n-m+1}^n \E \abs{\lambda_k -
          u}^p \right) \\
      & \le \max_{1\le k \le n-m} \E \abs{\lambda_k -
        \tilde{\lambda}_k}^p + \left(4^p+ \left(\frac{32}{9n}\right)^{\frac{p}{2}}\Gamma\left(1+ \frac{p}{2}\right) \right)\frac{m}{n},
    \end{split}
  \end{equation*}
  where $u$ is uniform in the outer part of the disc and independent
  of $\lambda_k$. Lemma \ref{T:coupling} and the triangle inequality
  for $W_p$ imply that
  \[
  \E W_p(\mu_n, \nu) < \frac{8}{\sqrt{n}} + 2
  \max_{1\le k \le n-m} \left( \E \abs{\lambda_k - \tilde{\lambda}_k}^p
  \right)^{1/p}
  + \left(4+\sqrt{\frac{32}{9n}} \Gamma\left(1+ \frac{p}{2}\right)^{1/p}\right) \left(\frac{m}{n}\right)^{1/p},
  \]
  and the proposition follows from Stirling's approximation.
\end{proof}

\begin{proof}[Proof of Theorem \ref{T:expected-Wp-distance-intro}]
  Let $m$ be such that $n-m$ is a perfect square, and such that if
  $1\le k\le n-m$, then
  $\ell = \lceil\sqrt{k}\rceil\le\sqrt{n}-\sqrt{\log n}.$ By Fubini's
  theorem and Corollary \ref{T:eigenvalue-deviation}, for $1\le k\le
  n-m$,
  \begin{equation*}\begin{split}
      \E\abs{\lambda_k-\tilde{\lambda}_k}^p&=\int_0^\infty
      pt^{p-1}\Prob\left[|\lambda_k-\tilde{\lambda}_k|>t\right]dt\\
      &=
      \frac{p}{n^{p/2}}\int_0^\infty
      s^{p-1}\Prob\left[\abs{\lambda_k-\tilde{\lambda}_k} > \frac{s}{\sqrt{n}}\right]ds\\
      &\le\frac{p}{n^{p/2}}\left(\frac{9^p}{p}
        +\int_9^{2+\pi(\ell-1)}Cs^{p-1}\exp\left[-\frac{(s-9)^2}{256\pi^2(\ell-1)}\right]ds
        +\int_{2+\pi(\ell-1)}^\infty
        Cs^{p-1}e^{-cs^2}ds\right)\\
      &\le\frac{p}{n^{p/2}}\left[\frac{9^p}{p}+\int_0^\infty
        Cs^{p-1}e^{-\frac{cs^2}{\ell-1}}ds\right]\\
      &\le\frac{pC^p(\ell-1)^{p/2}\Gamma\left(\frac{p+1}{2}\right)}{n^{p/2}}\\
      &\le\frac{pC^p\Gamma\left(\frac{p+1}{2}\right)}{n^{p/4}},
    \end{split}\end{equation*}
  since $\ell\le\sqrt{n}$.
  
  Noting that we can take $m\le c\sqrt{n\log n}$, it follows from Proposition
  \ref{T:coupling-to-uniform} that
  \[
  \E W_p(\mu_n,\nu) \le \frac{8}{\sqrt{n}} + 2 \frac{C\Gamma
      \left( \frac{p+1}{2} \right)^{\frac{1}{p}}}{n^{1/4}} +
  C\left(4+\sqrt{\frac{p}{n}}\right)\left(\frac{m}{n}\right)^{\frac{1}{p}}
  \le C \max
\left\{\frac{\sqrt{p}}{n^{1/4}}, \left(\frac{\log
        n}{n}\right)^{\frac{1}{2p}}\right\}.
  \]
\end{proof}

\begin{proof}[Proof of Theorem \ref{T:as-rate}]
  By Lemma \ref{T:coupling}, up to the value of absolute constants it
  suffices to prove the theorem with $\nu_n$ in place of $\nu$. Let
  $m$ be as in the proof of Theorem
  \ref{T:expected-Wp-distance-intro}.  For any $t > 0$,
  \begin{equation*}
    \begin{split}
      \Prob \left[ W_p(\mu_n, \nu_n) > t \right]
      & \le \Prob \left[ \frac{1}{n} \sum_{k=1}^n \abs{\lambda_k -
          \tilde{\lambda}_k}^p > t^p\right] \\
      & \le  \Prob\left[\sum_{k=1}^{n-m}\abs{\lambda_k -
          \tilde{\lambda}_k}^p > \frac{nt^p}{2}\right]
      + \Prob\left[\sum_{k=n-m+1}^{n}\abs{\lambda_k -
          \tilde{\lambda}_k}^p > \frac{nt^p}{2}\right] \\
      & \le \sum_{k=1}^{n-m} \Prob\left[ \abs{\lambda_k -
          \tilde{\lambda}_k}^p > \frac{nt^p}{2(n-m)} \right]
      + \sum_{k=n-m+1}^n \Prob\left[ \abs{\lambda_k -
          \tilde{\lambda}_k}^p > \frac{nt^p}{2m} \right] \\
      & \le \sum_{k=1}^{n-m} \Prob\left[ \abs{\lambda_k -
          \tilde{\lambda}_k} > \frac{t}{2} \right]
      + \sum_{k=n-m+1}^n \Prob\left[ \abs{\lambda_k -
          \tilde{\lambda}_k} > \left(\frac{n}{m}\right)^{1/p} \frac{t}{2} \right].
    \end{split}
  \end{equation*}

  Suppose first that $1 \le p \le 2$. If $K > 0$ is large enough, then
  for sufficiently large $n$, Theorem \ref{T:eigenvalue-deviation}
  implies that for $1 \le k \le n-m$,
  \[
  \Prob\left[ \abs{\lambda_k - \tilde{\lambda}_k} > K
    \frac{\sqrt{\log n}}{n^{1/4}} \right] \le \frac{1}{n^3}.
  \]
  Moreover, for $k > n-m$, Lemma \ref{T:quick-and-dirty} implies that
  for sufficiently large $n$,
  \[
  \Prob\left[ \abs{\lambda_k - \tilde{\lambda}_k} > K
    \left(\frac{n}{m}\right)^{1/p} \frac{\sqrt{\log n}}{n^{1/4}}
  \right] \le e^{-cn}.
  \]
  It follows that
  \[
  \sum_{n=1}^\infty \Prob\left[W_p(\mu_n, \nu_n) > 2 K \frac{\sqrt{\log
        n}}{n^{1/4}} \right] < \infty,
  \]
  and an application of the Borel--Cantelli lemma completes the proof.

  Now suppose that $p > 2$. If $K > 0$ is large enough, then similar
  arguments show that 
  \[
  \sum_{n=1}^\infty \Prob\left[W_p(\mu_n, \nu_n) > 2 K
    \left(\frac{\log n}{n} \right)^{1/2p} \right] < \infty,
  \]
and again the proof is completed by applying the Borel--Cantelli lemma.
  Note that the choice of $t = K \left(\frac{\log
      n}{n}\right)^{1/2p}$ is dictated entirely by the fact that the
  tail bound in Lemma \ref{T:quick-and-dirty} only applies when
  \[
  \left(\frac{n}{m}\right)^{1/p} \frac{t}{2} > 4.
  \]
\end{proof}

\section*{Acknowledgements}

The authors thank Nicolas Rougerie and Terry Tao for their comments on
a previous version of this paper.  This research was partially
supported by grants from the Simons Foundation (\#267058 to E.M.\ and
\#264103 to M.M.) and the U.S. National Science Foundation
(DMS-1308725 to E.M.).  This work was carried out while the authors
were visiting the Institut de Math\'ematiques de Toulouse at the
Universit\'e Paul Sabatier; the authors thank them for their generous
hospitality.

\bibliographystyle{plain}
\bibliography{ginibre}

\begin{thebibliography}{10}

\bibitem{AGZ}
G.~W. Anderson, A.~Guionnet, and O.~Zeitouni.
\newblock {\em An Introduction to Random Matrices}, volume 118 of {\em
  Cambridge Studies in Advanced Mathematics}.
\newblock Cambridge University Press, Cambridge, 2010.

\bibitem{Bai}
Z.~D. Bai.
\newblock Circular law.
\newblock {\em Ann. Probab.}, 25(1):494--529, 1997.

\bibitem{BoCh}
C.~Bordenave and D.~Chafa{\"{\i}}.
\newblock Around the circular law.
\newblock {\em Probab. Surv.}, 9:1--89, 2012.

\bibitem{Da12}
S.~Dallaporta.
\newblock Eigenvalue variance bounds for {W}igner and covariance random
  matrices.
\newblock {\em Random Matrices: Theory Appl.}, 1:1250007, 2012.

\bibitem{Feller}
W.~Feller.
\newblock {\em An Introduction to Probability Theory and its Applications.
  {V}ol. {I}}.
\newblock Third edition. John Wiley \& Sons, Inc., New York-London-Sydney,
  1968.

\bibitem{Fo}
P.~J. Forrester.
\newblock {\em Log-gases and random matrices}, volume~34 of {\em London
  Mathematical Society Monographs Series}.
\newblock Princeton University Press, Princeton, NJ, 2010.

\bibitem{Ginibre}
J.~Ginibre.
\newblock Statistical ensembles of complex, quaternion, and real matrices.
\newblock {\em J. Mathematical Phys.}, 6:440--449, 1965.

\bibitem{Gu}
J.~Gustavsson.
\newblock Gaussian fluctuations of eigenvalues in the {GUE}.
\newblock {\em Ann. Inst. H. Poincar\'e Probab. Statist.}, 41(2):151--178,
  2005.

\bibitem{HKPV06}
J.~B. Hough, M.~Krishnapur, Y.~Peres, and B.~Vir{\'a}g.
\newblock Determinantal processes and independence.
\newblock {\em Probab. Surv.}, 3:206--229, 2006.

\bibitem{KhSo}
B.~A. Khoruzhenko and H.~Sommers.
\newblock Non-{H}ermitian ensembles.
\newblock In {\em The {O}xford Handbook of Random Matrix Theory}, pages
  376--397. Oxford Univ. Press, Oxford, 2011.

\bibitem{MeMe-powers}
E.~S. Meckes and M.~W. Meckes.
\newblock Spectral measures of powers of random matrices.
\newblock {\em Electron. Commun. Probab.}, 18:no. 78, 13, 2013.

\bibitem{Mehta-1}
M.~L. Mehta.
\newblock {\em Random Matrices and the Statistical Theory of Energy Levels}.
\newblock Academic Press, New York-London, 1967.

\bibitem{Mehta-3}
M.~L. Mehta.
\newblock {\em Random Matrices}, volume 142 of {\em Pure and Applied
  Mathematics (Amsterdam)}.
\newblock Elsevier/Academic Press, Amsterdam, third edition, 2004.

\bibitem{RoSe}
N.~Rougerie and S.~Serfaty.
\newblock Higher dimensional {C}oulomb gases and renormalized energy
  functionals.
\newblock Preprint, available at \texttt{http://arxiv.org/abs/1307.2805}, 2013.

\bibitem{SaSe}
E.~Sandier and S.~Serfaty.
\newblock 2{D} {C}oulomb gases and the renormalized energy.
\newblock Preprint, available at \texttt{http://arxiv.org/abs/1201.3503}, 2012.

\bibitem{Talagrand}
M.~Talagrand.
\newblock {\em The Generic Chaining: Upper and Lower Bounds of Stochastic
  Processes}.
\newblock Springer Monographs in Mathematics. Springer-Verlag, Berlin, 2005.

\bibitem{TaVu1}
T.~Tao and V.~Vu.
\newblock Random matrices: the circular law.
\newblock {\em Commun. Contemp. Math.}, 10(2):261--307, 2008.

\bibitem{TaVu2}
T.~Tao and V.~Vu.
\newblock Random matrices: universality of {ESD}s and the circular law.
\newblock {\em Ann. Probab.}, 38(5):2023--2065, 2010.
\newblock With an appendix by {M.} Krishnapur.

\bibitem{Vi}
C{\'e}dric Villani.
\newblock {\em Optimal transport}, volume 338 of {\em Grundlehren der
  Mathematischen Wissenschaften [Fundamental Principles of Mathematical
  Sciences]}.
\newblock Springer-Verlag, Berlin, 2009.
\newblock Old and new.

\end{thebibliography}

\end{document}